\newcommand{\dd}{\text{d}}
\def\eps{\varepsilon}
\def\E{\mathbb{E}}
\def\P{\mathbb{P}}
\def\R{\mathbb{R}}
\def\dist{\textup{d}}
\def\O{\mathcal{O}}
\def\td{t_{\textup{diff}}}
\def\x{\textbf{x}}
\def\n{\textbf{n}}
\def\L{\mathcal{L}}
\def\kon{k_{\text{on}}}
\def\S{\mathbf{S}}
\def\SS{\widetilde{S}}
\def\imp{\textup{imp}}
\def\unif{\textup{unif}}
\def\neu{\textup{neu}}
\newtheorem{theorem}{Theorem}
\newtheorem{proposition}[theorem]{Proposition}
\theoremstyle{plain}
\theoremstyle{remark}
\newtheorem{remark}[theorem]{Remark}
\theoremstyle{definition}
\newtheorem{definition*}{Definition}
\begin{document}


\title{Competition between slow and fast regimes for extreme first passage times of diffusion}


\author{Jacob B. Madrid \and Sean D. Lawley\thanks{Department of Mathematics, University of Utah, Salt Lake City, UT 84112 USA (\texttt{lawley@math.utah.edu}). The authors were supported by the National Science Foundation (Grant Nos.\ DMS-1944574, DMS-1814832, and DMS-1148230).}
}
\date{\today}
\maketitle

\begin{abstract}
Many physical, chemical, and biological systems depend on the first passage time (FPT) of a diffusive searcher to a target. Typically, this FPT is much slower than the characteristic diffusion timescale. For example, this is the case if the target is small (the narrow escape problem) or if the searcher must escape a potential well. However, many systems depend on the first time a searcher finds the target out of a large group of searchers, which is the so-called extreme FPT. Since this extreme FPT vanishes in the limit of many searchers, the prohibitively slow FPTs of diffusive search can be negated by deploying enough searchers. However, the notion of ``enough searchers'' is poorly understood. How can one determine if a system is in the slow regime (dominated by small targets or a deep potential, for example) or the fast regime (dominated by many searchers)? How can one estimate the extreme FPT in these different regimes? In this paper, we answer these questions by deriving conditions which ensure that a system is in either regime and finding approximations of the full distribution and all the moments of the extreme FPT in these regimes. Our analysis reveals the critical effect that initial searcher distribution and target reactivity can have on extreme FPTs.
\end{abstract}

\section{Introduction}

The first time a diffusive searcher finds a target determines the timescale of many physical, chemical, and biological processes \cite{redner2001}. For example, the ``searcher'' could be an ion, a protein, a sperm cell, or an animal, and the ``target'' could be a membrane channel, a receptor, an egg, or a prey \cite{chou_first_2014}. This random time is called a first passage time (FPT).

In many applications, this FPT is much slower than the characteristic diffusion time. More precisely, let $\tau>0$ denote the random FPT of a single searcher and let $\td:=L^{2}/D$ be the diffusion time, where $L>0$ is some characteristic lengthscale describing the distance the searcher must travel to reach the target and $D>0$ is the searcher diffusivity. It is often the case that $\tau$ is much slower than $\td$,
\begin{align}\label{slow000}
\tau
\gg \td.
\end{align}
Indeed, the following three widely used frameworks are characterized by \eqref{slow000}.

The first framework is the so-called narrow escape problem \cite{holcman2014}, which seeks to determine how long it takes a diffusive searcher to find a small absorbing target(s) in an otherwise reflecting bounded domain (see Figure~\ref{figschem}a). Work on this problem dates back to Helmholtz \cite{helmholtz1860} and Rayleigh \cite{rayleigh1945} in studies of acoustics, but more recent interest has been driven by applications to biology \cite{holcman2014time}, especially molecular and cellular biology. Indeed, the timescales of many cellular processes depend on the arrival of diffusing ligands to small proteins \cite{benichou2008,bressloff13,grebenkov2017}.

\begin{figure}[t]
\centering
\includegraphics[width=1\linewidth]{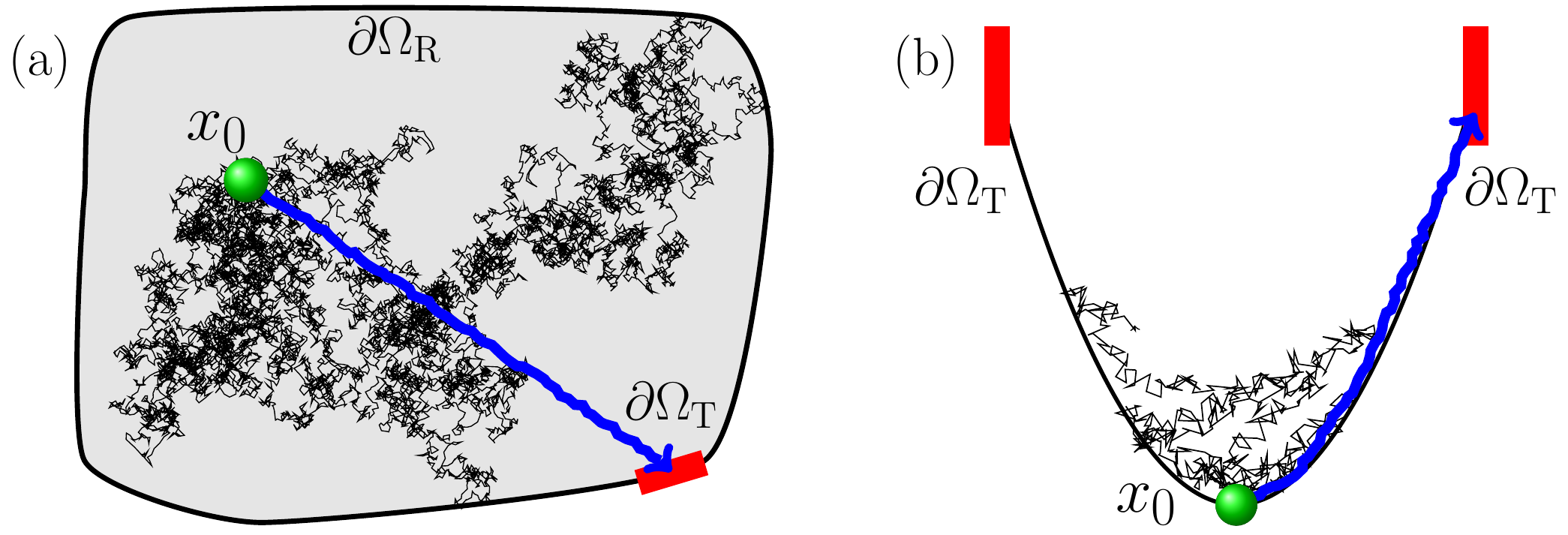}
\caption{
Panel (a) illustrates the narrow escape limit, in which searchers wander around the domain for a long time (thin black trajectory) before finding the target (red window labeled $\partial\Omega_{\text{T}}$). In the many searcher limit, the fastest searchers take an almost direct path (thick blue trajectory) from the starting position (green ball labeled $x_{0}$) to the target. Panel (b) illustrates the analogous situation for searchers that must escape a deep potential well to find the target.
}
\label{figschem}
\end{figure}

A second prototypical scenario which can yield the slow FPT behavior in \eqref{slow000} involves so-called partially absorbing targets \cite{grebenkov2006}. Partially absorbing targets often arise from homogenizing a patchy surface which contains perfectly absorbing targets on an otherwise reflecting surface \cite{Berezhkovskii2004,Muratov2008,cheviakov2012,Dagdug2016,bernoff2018b,lawley2019fpk}. Examples include chemicals binding to cell membrane receptors \cite{berg1977}, reactions on porous catalyst support structures \cite{keil1999}, diffusion current to collections of microelectrodes \cite{scharifker1988}, and water transpiration through plant stomata \cite{brown1900, wolf2016}. Mathematically, partially absorbing targets require a Robin (also called reactive, radiation, or third-type) boundary condition in the corresponding Fokker-Planck equation which involves a reactivity (or trapping rate) parameter.

A third framework characterized by the slow FPT in \eqref{slow000} is when the searcher must escape a potential well to find the target  (see Figure~\ref{figschem}b). This very classical problem arises in Kramers' reaction rate theory and is important for understanding the non-equilibrium behavior of many different physical, chemical, and biological processes \cite{hanggi1990, pollak2005}. 

In each of these three frameworks, there is a natural dimensionless parameter $\eps>0$ characterizing the FPT, $\tau=\tau(\eps)$. In the narrow escape problem, $\eps$ measures the target size. For partially absorbing targets, $\eps$ measures the target reactivity. In the case of escape from a potential well, $\eps$ measures the potential depth. Much of the theoretical work on these three problems has focused on determining how the FPT diverges in the limit $\eps\to0$. Indeed, detailed asymptotic approximations have been developed to understand the following divergence \cite{holcman2013},
\begin{align}\label{slow2}
\frac{\tau(\eps)}{\td}
\to\infty\quad\text{as }\eps\to0.
\end{align}

However, several recent studies and commentaries have announced a significant paradigm shift in understanding the timescales in many biological systems \cite{basnayake2019, schuss2019, coombs2019, redner2019, sokolov2019, rusakov2019, martyushev2019, tamm2019, basnayake2019c}. These works have noted that in many systems, the relevant timescale is not the time it takes a given single searcher to find the target, but rather the time it takes the fastest searcher to find the target out of many searchers. One particularly striking example occurs in human reproduction, in which fertilization is triggered by the first sperm cell to find an egg out of $3\times10^{8}$ sperm cells \cite{meerson2015}. More generally, it is believed that deploying many searchers is a common strategy employed by biological systems in order to overcome the prohibitively slow FPTs associated with diffusive search. Indeed, the recently formulated ``redundancy principle'' posits that the many seemingly redundant copies of an object (cells, proteins, molecules, etc.)\ are not a waste, but rather have the specific function of accelerating activation rates \cite{schuss2019}.

To describe the problem more precisely, let $\tau_{1},\dots,\tau_{N}$ be $N$ independent realizations of some FPT $\tau=\tau(\eps)$. If these represent the respective search times of $N$ searchers, then the fastest searcher finds the target at time
\begin{align*}
T_{N}
=T_{N}(\eps)
:=\min\{\tau_{1},\dots,\tau_{N}\}.
\end{align*}
The time $T_{N}$ is called an extreme statistic or extreme FPT \cite{colesbook}. Importantly, if $N$ is large, then $T_{N}$ is much faster than $\tau$. Indeed, with probability one we have that
\begin{align}\label{vanish}
\frac{T_{N}(\eps)}{\td}\to0\quad\text{as }N\to\infty\text{ for any fixed $\eps>0$}.
\end{align}
Moreover, it was recently shown \cite{lawley2020uni} that if the searchers cannot start arbitrarily close to the target, then the leading order divergence of $T_{N}$ as $N\to\infty$ is completely independent of $\eps$ (for each of the three frameworks above). That is, if $N$ is sufficiently large, then the size of the targets, their reactivity, and the potential have no effect on $T_{N}$. 

Therefore, taking $N\to\infty$ and taking $\eps\to0$ are competing limits. That is, if we fix any $\eps>0$ (meaning any fixed target size, reactivity, or potential) and take $N\to\infty$, then $T_{N}$ vanishes as in \eqref{vanish}. On the other hand, if we fix the number of searchers $N\ge1$ and take $\eps\to0$, then with probability one
\begin{align}\label{grow}
\frac{T_{N}(\eps)}{\td}\to\infty\quad\text{as }\eps\to0\text{ for any fixed $N\ge1$}.
\end{align}
Figure~\ref{figschem} illustrates these two competing limits for the narrow escape problem (panel (a)) and for escape from a potential well (panel (b)). 

Since many systems are described by both $\eps\ll1$ and $N\gg1$ \cite{schuss2019}, this raises several natural questions. How can we determine if a system is in the fast escape regime in \eqref{vanish} or the slow escape regime in \eqref{grow}? How can we approximate the distribution of $T_{N}(\eps)$ in these two regimes? How do these distributions depend on the initial searcher locations, spatial dimension, target size, target reactivity, potential depth, etc.?

In this paper, we answer these questions for a variety of systems. In particular, we derive general criteria to determine if $T_{N}(\eps)$ is either in the fast regime in \eqref{vanish} or the slow regime in \eqref{grow} and approximate the full probability distribution of $T_{N}(\eps)$ in these regimes. Furthermore, this analysis reveals that $T_{N}(\eps)$ does not depend on the initial searcher distribution in the slow regime in \eqref{grow}, but $T_{N}(\eps)$ depends critically on the initial searcher distribution in the fast regime in \eqref{vanish}. Indeed, we find several qualitatively different behaviors of $T_{N}(\eps)$, including $\E[T_{N}(\eps)]/\td$ scaling as 
\begin{align*}
\frac{1}{\ln N},\quad
\frac{\E[\tau]}{\td}\frac{1}{N},\quad
\Big(\frac{\E[\tau]}{\td}\Big)^{4}\frac{1}{N^{2}},\quad
\exp\Big(4\frac{\E[\tau]}{\td}\Big)\frac{1}{N^{2}},\quad\text{as }N\to\infty,
\end{align*}
depending on the initial searcher distribution and other details in the problem. 

The rest of the paper is organized as follows. In Section~\ref{slow}, we analyze the $\eps\to0$ regime of \eqref{grow} for a general class of drift-diffusion processes and apply the results to the narrow escape, partial absorption, and deep potential well problems discussed above. In Section~\ref{fast}, we prove general theorems which give the full distribution and all the moments of $T_{N}(\eps)$ in the $N\to\infty$ regime of \eqref{vanish} based on the short-time distribution of $\tau(\eps)$. In Section~\ref{competition}, we apply the results from Sections~\ref{slow} and \ref{fast} to study the competition between the $\eps\to0$ and $N\to\infty$ limits in some analytically tractable examples. The results of this analysis are confirmed by numerical simulations. We conclude by summarizing our results in Table~\ref{tablesummary}, discussing related work, and highlighting some biological implications. An Appendix collects some proofs and technical points.

\section{Slow escape regime}\label{slow}

Let $\tau>0$ be the FPT for a single diffusive searcher to find a target in a bounded domain. Define the characteristic diffusion timescale,
\begin{align*}
\td
:=\frac{L^{2}}{D},
\end{align*}
where $L>0$ is a characteristic lengthscale describing the size of the domain and $D>0$ is the searcher diffusivity. As we see below, if the mean FPT (MFPT) is much slower than the diffusion time,
\begin{align*}
\E[\tau]
\gg \td,
\end{align*}
then it is generally the case that $\tau$ is approximately exponentially distributed \cite{friedman1976, devinatz1978, schuss1980, williams1982, marchetti1983, day1983}.

Now, it is straightforward to check that the minimum of $N$ independent exponential random variables is also exponential. Therefore, if a single FPT $\tau$ is approximately exponential, then the minimum of $N$ independent realizations of $\tau$,
\begin{align*}
T_{N}
:=\min\{\tau_{1},\dots,\tau_{N}\},
\end{align*}
is also approximately exponentially distributed, at least if $N$ is ``sufficiently small.'' In fact, if $N$ is sufficiently small, then we can approximate the full distribution of the ordered sequence of FPTs,
\begin{align}\label{Tknorder}
T_{1,N}<T_{2,N}<\cdots<T_{N-1,N}<T_{N,N},
\end{align}
where $T_{k,N}$ denotes the $k$th fastest FPT,
\begin{align*}
T_{k,N}
:=\min\big\{\{\tau_{1},\dots,\tau_{N}\}\backslash\cup_{j=1}^{k-1}\{T_{j,N}\}\big\},\quad k\in\{1,\dots,N\},
\end{align*}
where $T_{1,N}:=T_{N}$. In this section, we make these ideas precise, characterize the $N$ ``sufficiently small'' regime, and apply the analysis to some prototypical scenarios. 

\subsection{General mathematical analysis}

We first determine the distribution of the ordered FPTs in \eqref{Tknorder} if the individual FPTs $\{\tau_{n}\}_{n=1}^{N}$ are approximately exponential. We begin by recalling the definition of convergence in distribution.

\begin{definition*}
A sequence of random variables $\{X_{j}\}_{j\ge1}$ \emph{converges in distribution} to a random variable $X$ as $j\to\infty$ if
\begin{align}\label{cddef}
\P(X_{j}\le x)
\to\P(X\le x)\quad\text{as }j\to\infty,
\end{align}
for all points $x\in\R$ such that the function $F(x):=\P(X\le x)$ is continuous. If \eqref{cddef} holds, then we write
\begin{align*}
X_{j}\to_{\dist}X\quad\text{as }j\to\infty.
\end{align*}
\end{definition*}

The following proposition gives the distribution of the ordered FPTs \eqref{Tknorder} if the individual FPTs are approximately exponential. Throughout this work, we write
\begin{align*}
X=_{\textup{d}}\textup{Exp}(t)
\end{align*}
to denote that a random variable $X\ge0$ has an exponential distribution with mean $t>0$ (or equivalently with rate $1/t$), which means $\P(X> x)=e^{-x/t}$ for $x\ge0$.

\begin{proposition}\label{propgen}
Let $\tau=\tau(\eps)$ be a random variable that depends on some parameter $\eps>0$. Assume that there exists a scaling $\lambda=\lambda(\eps)>0$ so that
\begin{align}\label{ac}
\lambda \tau
\to_{\dist}\textup{Exp}(1)\quad\text{as }\eps\to0.
\end{align}
Let $\{\tau_{n}\}_{n=1}^{N}$ be $N\ge1$ independent realizations of $\tau$ and define the $k$th order statistic,
\begin{align*}
T_{k,N}
:=\min\big\{\{\tau_{1},\dots,\tau_{N}\}\backslash\cup_{j=1}^{k-1}\{T_{j,N}\}\big\},\quad k\in\{1,\dots,N\}.
\end{align*}
If $k\in\{1,\dots,N\}$, then
\begin{align}\label{genc0}
\lambda T_{k,N}
&\to_{\dist}\sum_{j=1}^{k}\frac{X_{j}}{N-j+1}\quad\text{as }\eps\to0,
\end{align}
where $\{X_{n}\}_{n=1}^{N}$ are iid with $X_{n}=_{\dist}\textup{Exp}(1)$. In fact, the following $N$-dimensional random variable converges in distribution,
\begin{align}\label{genc}
\begin{split}
&\lambda(T_{1,N},T_{2,N},\dots,T_{N-1,N},T_{N,N})\\
&\quad\to_{\dist}\bigg(\frac{X_{1}}{N},
\frac{X_{1}}{N}+\frac{X_{2}}{N-1},\dots,
\sum_{j=1}^{N}\frac{X_{j}}{N-j+1}\bigg)\quad\text{as }\eps\to0.
\end{split}
\end{align}
\end{proposition}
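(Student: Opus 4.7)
The plan is to reduce the claim to the classical distributional theory of order statistics of iid exponentials, via three standard inputs: lifting the marginal convergence \eqref{ac} to joint convergence of the $N$-vector $\lambda(\tau_1,\dots,\tau_N)$; applying the continuous mapping theorem to the sorting map; and invoking R\'enyi's representation of exponential order statistics.

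First, I would upgrade \eqref{ac} to a joint statement. Since $\tau_1,\dots,\tau_N$ are independent for every $\eps>0$ and each marginal $\lambda\tau_n$ converges to $\textup{Exp}(1)$, the joint characteristic function of $\lambda(\tau_1,\dots,\tau_N)$ factors, and pointwise convergence of each factor yields pointwise convergence of the product. By L\'evy's continuity theorem,
\begin{align*}
\lambda(\tau_1,\dots,\tau_N)\to_{\dist}(E_1,\dots,E_N)\quad\text{as }\eps\to0,
\end{align*}
where $E_1,\dots,E_N$ are iid $\textup{Exp}(1)$. I would then apply the continuous mapping theorem to the sorting map $\phi:\R^N\to\R^N$, $\phi(x_1,\dots,x_N)=(x_{(1)},\dots,x_{(N)})$, which is Lipschitz with respect to the sup norm, and note that $\lambda(T_{1,N},\dots,T_{N,N})=\phi\bigl(\lambda(\tau_1,\dots,\tau_N)\bigr)$. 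This gives convergence of the left-hand side of \eqref{genc} to $(E_{(1)},\dots,E_{(N)})$.

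To identify $(E_{(1)},\dots,E_{(N)})$ with the right-hand side of \eqref{genc}, I would invoke R\'enyi's representation: the spacings $E_{(k)}-E_{(k-1)}$ (with $E_{(0)}:=0$) are independent and $E_{(k)}-E_{(k-1)}=_{\dist}X_k/(N-k+1)$, where $X_1,\dots,X_N$ are iid $\textup{Exp}(1)$. This follows from the memoryless property: conditionally on $E_{(1)}$, the remaining $N-1$ exponentials shifted by $E_{(1)}$ are again iid $\textup{Exp}(1)$, and $E_{(1)}=_{\dist}\textup{Exp}(N)=_{\dist}X_1/N$; one then iterates on the remaining $N-1$ variables. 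Summing spacings yields the vector on the right-hand side of \eqref{genc}, and projecting onto the $k$th coordinate produces \eqref{genc0}.

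The main conceptual hurdle is the joint lift from marginal to joint convergence, but since independence of the $\tau_n$ is preserved under the deterministic scaling by $\lambda(\eps)$, it reduces to bookkeeping with characteristic functions. Everything else is classical and uses no property of $\tau(\eps)$ beyond the single hypothesis \eqref{ac}.
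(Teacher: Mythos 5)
Your proposal is correct and follows essentially the same route as the paper's proof: joint convergence of $\lambda(\tau_1,\dots,\tau_N)$ to iid $\textup{Exp}(1)$ variables, the continuous mapping theorem applied to the sorting map, and R\'enyi's representation of exponential order statistics. The only difference is that you spell out the marginal-to-joint lift (via characteristic functions) and sketch the R\'enyi spacing argument, both of which the paper leaves implicit or cites.
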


In words, the distribution corresponding to \eqref{genc0}-\eqref{genc} in Proposition~\ref{propgen} means that the times $\{T_{k,N}\}_{k=1}^{N}$ arrive according to a Poisson process with rate $\lambda (N-k)$ between the $k$th and $(k+1)$st arrivals (as $N\to\infty$). The proof of Proposition~\ref{propgen} is given in the Appendix.

\subsection{General spectral expansion}\label{spectral}

Proposition~\ref{propgen} implies that if the parameter regime is such that a single FPT $\tau$ is approximately exponential with rate $\lambda>0$, then the ordered sequence of FPTs in \eqref{Tknorder} has the distribution in \eqref{genc0}-\eqref{genc}, at least if $N$ is sufficiently small. In particular, the fastest FPT, $T_{N}=T_{1,N}$, is well-approximated by an exponential random variable with rate $N\lambda$. We now estimate when this approximation breaks down as $N$ increases.

In order to answer this question, we need information about the rate of convergence in \eqref{ac} for a single FPT. Now, it is often the case that the survival probability of a single FPT of a diffusion process can be expressed in terms of an eigenfunction expansion of the associated backward Kolmogorov equation. In this section, we describe this general situation to obtain a form for the convergence rate in \eqref{ac}.

Let $\Omega\subset\R^{d}$ be a bounded $d$-dimensional spatial domain with $d\ge1$. Assume that the boundary of the domain, $\partial\Omega$, contains a distinguished region(s), $\partial\Omega_{\text{T}}\subseteq\partial\Omega$, which we call the target, and let $\partial\Omega_{\text{R}}=\partial\Omega\backslash\partial\Omega_{\text{T}}$ denote the rest of the boundary. See Figure~\ref{figschem} for an illustration.

Consider a stochastic process $\{X(t)\}_{t\ge0}$ that diffuses in $\overline{\Omega}$ according to the stochastic differential equation (SDE),
\begin{align}\label{sde}
\dd X(t)
=-\nabla V(X(t))\,\dd t+\sqrt{2D}\,\dd W(t),
\end{align}
with reflecting boundary conditions on $\partial\Omega=\partial\Omega_{\text{T}}\cup\partial\Omega_{\text{R}}$. In \eqref{sde}, the drift term is the gradient of a potential $V:\overline{\Omega}\to\R$ and the noise term involves the diffusivity $D>0$ and a standard $d$-dimensional Brownian motion $\{W(t)\}_{t\ge0}$. Let $\tau>0$ be the first time the diffusion process reaches the target,
\begin{align}\label{tau}
\tau
:=\inf\{t>0:X(t)\in\partial\Omega_{\text{T}}\}.
\end{align}
The survival probability conditioned on the searcher starting position,
\begin{align*}
\S(x,t)
:=\P(\tau>t\,|\,X(0)=x),
\end{align*}
satisfies the backward Kolmogorov (or backward Fokker-Planck) equation \cite{pavliotis2014},
\begin{align}\label{backward}
\begin{split}
\tfrac{\partial}{\partial t}\S
&=\L \S,\quad x\in\Omega,\\
\S
&=0,\quad x\in\partial\Omega_{\text{T}},\\
\tfrac{\partial}{\partial\n}\S
&=0,\quad x\in\partial\Omega_{\text{R}},\\
S
&=1,\quad t=0.
\end{split}
\end{align}
In \eqref{backward}, the differential operator $\L$ is the infinitesimal generator of the SDE in \eqref{sde},
\begin{align*}
\L
=-\nabla V(x)\cdot\nabla+D\Delta,
\end{align*}
and $\frac{\partial}{\partial\n}$ is the derivative with respect to the inward unit normal $\n:\partial\Omega\to\R^{d}$.

For the Boltzmann-type weight function,
\begin{align}\label{rho}
\rho(x)
:=\frac{e^{-V(x)/D}}{\int_{\Omega}e^{-V(y)/D}\,\dd y},
\end{align}
it is straightforward to check that the operator $\L$ is formally self-adjoint on the weighted space of square integrable functions \cite{pavliotis2014},
\begin{align*}
L_{\rho}^{2}(\Omega)
:=\Big\{f:\int_{\Omega}|f(x)|^{2}\rho(x)\,\dd x<\infty\Big\},
\end{align*}
with the boundary conditions in \eqref{backward} and the weighted inner product,
\begin{align*}
(f,g)_{\rho}
:=\int_{\Omega}f(x)g(x)\rho(x)\,\dd x.
\end{align*} 
We thus formally expand the solution to \eqref{backward},
\begin{align}\label{Seig}
\S(x,t)=\sum_{n\ge0}(u_{n},1)_{\rho}e^{-\lambda_{n}t}u_{n}(x),
\end{align}
where
\begin{align}\label{order}
0<\lambda_{0}<\lambda_{1}\le\dots,
\end{align}
are the positive eigenvalues of $-\L$ with eigenfunctions $\{u_{n}(x)\}_{n\ge1}$ satisfying
\begin{align}\label{evalproblem}
\begin{split}
-\L u_{n}
&=\lambda_{n}u_{n},\quad x\in\Omega,\\
u_{n}
&=0,\quad x\in\partial\Omega_{\text{T}},\\
\tfrac{\partial}{\partial\n}u_{n}
&=0,\quad x\in\partial\Omega_{\text{R}},
\end{split}
\end{align}
and which are orthonormal,
\begin{align}\label{orthonormal}
(u_{n},u_{m})_{\rho}
=\delta_{nm}\in\{0,1\},
\end{align}
where $\delta_{nm}$ is the Kronecker delta function ($\delta_{nm}=0$ if $n\neq m$ and $\delta_{nn}=1$). 

If a searcher has initial distribution given by a probability measure $\mu_{0}$,
\begin{align}\label{initial}
\P(X(0)\in B)=\mu_{0}(B)=\int_{B}1\,\dd \mu_{0}(x),\quad B\subset\Omega,
\end{align}
then the survival probability of the FPT $\tau$ in \eqref{tau} is
\begin{align*}
S(t)
&:=\P(\tau>t\,|\,X(0)=_{\dist}\mu_{0})
=\int_{\Omega}\S(x,t)\,\dd\mu_{0}(x).
\end{align*}
The eigenfunction expansion above thus gives a formal representation for $S(t)$ as a sum of decaying exponentials,
\begin{align}\label{form}
S(t)
=\sum_{n\ge0}A_{n}e^{-\lambda_{n}t},
\end{align}
where the coefficients are
\begin{align}\label{As}
A_{n}
:=(u_{n},1)_{\rho}\int_{\Omega}u_{n}(x)\,\dd\mu_{0}(x),
\quad n\ge0.
\end{align}


\subsection{Necessary and sufficient conditions for the slow exponential regime}

In many situations, the FPT in \eqref{tau} with survival probability in \eqref{form} is well-approximated by an exponential random variable with rate given by the principal eigenvalue $\lambda_{0}>0$ \cite{friedman1976, devinatz1978, schuss1980, williams1982, marchetti1983, day1983}. That is,
\begin{align*}
\tau
\approx_{\dist}{\textup{Exp}(1/\lambda_{0})}.
\end{align*}
Indeed, this is generically the case when $\lambda_{0}\td\ll1$ (see below). 

In this section, we therefore assume that (i) $\tau>0$ is a nonnegative random variable with survival probability given by a sum of decaying exponentials as in \eqref{form} and (ii) that
\begin{align}\label{acd}
\lambda_{0}\tau
\to_{\dist}\textup{Exp}(1)\quad\text{as }\eps\to0,
\end{align}
where $\eps>0$ is some dimensionless parameter. Using \eqref{form} and the definition of convergence in distribution in \eqref{cddef}, we have that if $x\ge0$, then \eqref{acd} means that
\begin{align*}
\P(\lambda_{0}\tau>x)
=e^{-x}+e^{-x}\Big[A_{0}-1+\sum_{n\ge1}A_{n}e^{(1-\lambda_{n}/\lambda_{0})x}\Big]
\to e^{-x}\quad\text{as }\eps\to0.
\end{align*}
In particular, if we define the error term
\begin{align*}
\eta(\eps,x)
:=A_{0}-1+\sum_{n\ge1}A_{n}e^{(1-\lambda_{n}/\lambda_{0})x},
\end{align*}
then we are assured that
\begin{align*}
\eta(\eps,x)\to0\quad\text{as }\eps\to0\quad\text{for all }x\ge0.
\end{align*}

Combining the assumption in \eqref{acd} with Proposition~\ref{propgen}, we can immediately conclude that the limiting distributions of the ordered FPTs $T_{k,N}$ for $k\in\{1,\dots,N\}$ as $\eps\to0$ are \eqref{genc0}-\eqref{genc}. However, if we fix $\eps>0$ and take $N$ large, then $T_{k,N}$ might leave the regime in \eqref{genc0}-\eqref{genc}. How large can we take $N$ and still be assured that $T_{k,N}$ is in the regime in \eqref{genc0}-\eqref{genc}? We first consider the case $k=1$. That is, we ask how large can we take $N$ and still guarantee that the fastest FPT $T_{N}=T_{1,N}$ is approximately exponential with rate $N\lambda_{0}>0$.

By definition of $T_{N}$, we have that
\begin{align*}
\P(N\lambda_{0}T_{N}>x)
&=\big[\P(\tau>x/(N\lambda_{0})\big]^{N}\\
&=e^{-x}\Big\{1+\sum_{k=1}^{N}{N\choose k}\big[A_{0}-1+\sum_{n\ge1}A_{n}e^{(1-\lambda_{n}/\lambda_{0})x/N}\big]^{k}\Big\}\\
&=e^{-x}\Big\{1+\sum_{k=1}^{N}{N\choose k}\big[\eta(\eps,x/N)\big]^{k}\Big\}\\
&=e^{-x}\Big\{1+N\eta(\eps,x/N)+\O\big((\eta(\eps,x/N))^{2}\big)\Big\}\quad\text{as }\eps\to0.
\end{align*}
Hence, the regime in \eqref{genc0}-\eqref{genc} in Proposition~\ref{propgen} requires that
\begin{align}\label{cond}
\big|\eta(\eps,x/N)\big|
=\Big|A_{0}-1+\sum_{n\ge1}A_{n}e^{(1-\lambda_{n}/\lambda_{0})x/N}\Big|
\ll1/N,\quad\text{for }x\ge0.
\end{align}
Notice that the first term in \eqref{cond}, $A_{0}-1$, is independent of $N$. Therefore, if the condition in \eqref{cond} breaks down as $N\to\infty$, then we expect that it is due to the growth of the second term. Hence, we simplify \eqref{cond} to
\begin{align*}
\Big|\sum_{n\ge1}A_{n}e^{(1-\lambda_{n}/\lambda_{0})x/N}\Big|
\ll1/N,\quad\text{for }x\ge0.
\end{align*}
Using the ordering \eqref{order}, we further simplify this to the condition
\begin{align}\label{cond000}
N\exp\Big(\frac{-\lambda_{1}}{\lambda_{0}N}\Big)
\ll1.
\end{align}

Therefore, \eqref{cond000} is a sufficient condition for $T_{N}\approx_{\dist}{\textup{Exp}(\E[\tau]/N)}$. However, to make \eqref{cond000} more readily applicable, we need to estimate $\lambda_{0}$ and $\lambda_{1}$. In the $\eps\ll1$ regime, we have that $\E[\tau]\approx1/\lambda_{0}$ (regardless of $N$). Furthermore, if we have a non-vanishing spectral gap, then $(\lambda_{1}-\lambda_{0})\td\not\to0$ as $\eps\to0$, where $\td$ is the diffusion time, $\td:=L^{2}/D$, where $L>0$ is a characteristic lengthscale describing the size of the domain $\Omega$. Since $(\lambda_{1}-\lambda_{0})\td\le\lambda_{1}\td$, it follows that $\lambda_{1}\td\not\ll1$ for $\eps\ll1$. We thus obtain from \eqref{cond000} the following \emph{sufficient} condition for $T_{N}\approx_{\dist}{\textup{Exp}(\E[\tau]/N)}$, 
\begin{align}\label{cond2}
N\exp\Big(\frac{-\E[\tau]}{N\td}\Big)
\ll1.
\end{align}
Upon noting that the survival probability of the $k$th fastest FPT satisfies
\begin{align*}
\P(T_{k,N}>t)
=\sum_{j=0}^{k-1}{N\choose j}\P(\tau\le t)^{j}\P(\tau>t)^{N-j},
\end{align*}
a similar calculation extends \eqref{cond2} to the general case $k\in\{1,\dots,N\}$. That is, if \eqref{cond2} is satisfied, then this analysis predicts that $T_{k,N}$ is in the regime in \eqref{genc0}-\eqref{genc} in Proposition~\ref{propgen}.

We would now like to derive a \emph{necessary} condition for $T_{N}\approx_{\dist}{\textup{Exp}(\E[\tau]/N)}$. However, we show below that for certain initial searcher distributions, $T_{N}$ is exactly exponentially distributed for all $N\ge1$. Therefore, in order to have a necessary condition for $T_{N}\approx_{\dist}{\textup{Exp}(\E[\tau]/N)}$, we need to restrict to a certain class of initial searcher distributions. Specifically, we suppose that the initial searcher locations cannot be arbitrarily close to the target. More precisely, suppose each searcher has initial distribution given by a probability measure $\mu_{0}$ as in \eqref{initial}, and assume that the support of $\mu_{0}$,
\begin{align*}
U_{0}:=\text{supp}(\mu_{0}),
\end{align*}
does not intersect the closure of the target,
\begin{align}\label{away}
U_{0}\cap \overline{U_{\text{T}}}=\varnothing.
\end{align}
Note that $U_{0}$ is necessarily a closed set.

Assuming \eqref{away}, it was shown in \cite{lawley2020uni} that
\begin{align}\label{prev}
\E[T_{k,N}]
\sim \frac{L^{2}}{4D\ln N}
=\frac{\td}{4\ln N}\quad\text{as }N\to\infty,
\end{align}
where $L>0$ is a certain lengthscale describing the shortest distance a searcher must travel to reach the target. Now, suppose $\td/\E[\tau]\ll1$ (so that a single FPT is in the exponential regime) and note that $\E[T_{N}]$ is a monotonically decreasing function of $N\ge1$. Therefore, for sufficiently small values of $N$ that satisfy \eqref{cond2}, we have that $\E[T_{N}]\approx\E[\tau]/N\gg\td/(4\ln N)$. Then, as $N$ increases, $\E[T_{N}]$ must decrease monotonically to the regime in \eqref{prev}. Since $\E[T_{N}]\approx\E[\tau]/N$ if $T_{N}\approx_{\dist}\text{Exp}(\E[\tau]/N)$, it follows that if $\E[\tau]/N\ll\td/(4\ln N)$, then $T_{N}$ is not in the exponential regime. Put another way, if $T_{N}\approx_{\dist}{\textup{Exp}(\E[\tau]/N)}$, then
\begin{align}\label{condn}
\frac{\E[\tau]}{N}
\not\ll\frac{\td}{4\ln N}.
\end{align}
Hence, \eqref{condn} is a \emph{necessary} condition for $T_{N}\approx_{\dist}\text{Exp}(\E[\tau]/N)$. We emphasize that the condition in \eqref{condn} assumes \eqref{away}. Indeed, we show below that $T_{N}$ can be exactly exponential for all values of $N$ for a certain initial condition which violates \eqref{away}. 

Notice that if we rearrange the necessary condition in \eqref{condn} and take the logarithm of the sufficient condition in \eqref{cond2} and rearrange, then we find that:
\begin{align}\label{suff5}
&\displaystyle\text{If }\frac{\td}{\E[\tau]}
\ll\frac{1}{N\ln N},\text{ then }T_{N}\approx_{\dist}{\textup{Exp}(\E[\tau]/N)}.\\
&\displaystyle\text{If }\frac{\td}{\E[\tau]}
\gg\frac{4\ln N}{N},\text{ then }T_{N}\not\approx_{\dist}{\textup{Exp}(\E[\tau]/N)}.\label{nec5}
\end{align}
Again, \eqref{nec5} assumes \eqref{away}. We illustrate \eqref{suff5}-\eqref{nec5} in Figure~\ref{figcrossover}.

\begin{figure}[t]
\centering
\includegraphics[width=.7\linewidth]{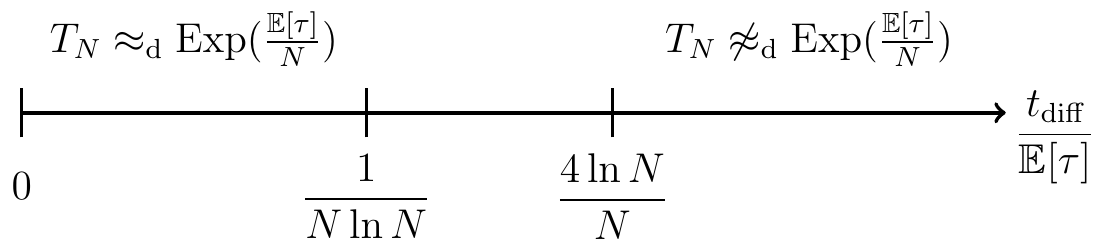}
\caption{
Illustration of the conditions \eqref{suff5}-\eqref{nec5}. The necessary condition in \eqref{nec5} assumes \eqref{away}, which means that the searchers cannot start arbitrarily close to the target. 
}
\label{figcrossover}
\end{figure}

\subsection{Narrow escape with a perfectly absorbing target}\label{narrow}

We now apply the analysis of the previous sections to some prototypical examples. We first consider a diffusive searcher in a bounded domain with small targets, which is the narrow escape problem \cite{holcman2014}. In particular, consider the setup of Section~\ref{spectral} in dimension $d\in\{2,3\}$ with pure diffusion (i.e.\ $V\equiv0$ in \eqref{sde}). In this case, the natural small parameter is the dimensionless target size, $\eps:=\sigma$, where
\begin{align}\label{sigma}
\sigma
:=\Big(\frac{|\partial\Omega_{\text{T}}|}{|\partial\Omega|}\Big)^{1/(d-1)}
\ll1,
\end{align}
which compares the $(d-1)$-dimensional area of the target $\partial\Omega_{\text{T}}$ to the $(d-1)$-dimensional area of the rest of the boundary $\partial\Omega$. As a technical condition, assume that the isoperimetric ratio remains bounded,
\begin{align}\label{iso}
\frac{|\partial\Omega|^{1/(d-1)}}{|\Omega|^{1/d}}
=\O(1)\quad\text{for }\sigma\ll1,
\end{align}
where $|\Omega|$ denotes the $d$-dimensional volume of the domain (\eqref{iso} prevents pathological cases \cite{holcman2014}).

Then in the $\sigma\to0$ limit (i.e.\ the small target or narrow escape limit), it is well-known \cite{holcman2014} that $\tau$ becomes exponentially distributed with a vanishing rate $\lambda_{0}>0$, where the asymptotic form of $\lambda_{0}$ depends on the dimension $d\in\{2,3\}$ and the geometry of the domain and the target. The basic idea is that in the limit $\sigma\to0$, the entire boundary becomes reflecting and the spectral problem in \eqref{evalproblem} approaches the Neumann spectral problem,
\begin{align*}
-D\Delta u_{n}^{\neu}
&=\lambda_{n}^{\neu}u_{n}^{\neu},\quad x\in\Omega,\\
\tfrac{\partial}{\partial\n}u_{n}^{\neu}
&=0,\quad x\in\partial\Omega.
\end{align*}
In particular, in the limit $\sigma\to0$, we have that
\begin{align*}
\lambda_{0}
&\to\lambda_{0}^{\neu}=0,\quad \lambda_{n}\to\lambda_{n}^{\neu}>0\quad n\ge1,\\
u_{0}
&\to u_{0}^{\neu}=1,\quad u_{n}\to u_{n}^{\neu}\quad n\ge1,
\end{align*}
and the orthonormality (see \eqref{orthonormal} with $\rho\equiv1/|\Omega|$ in \eqref{rho}) implies that
\begin{align*}
(u_{0},1)_{\rho}\to(u_{0}^{\neu},1)_{\rho}=1,\quad
(u_{n},1)_{\rho}\to(u_{n}^{\neu},1)_{\rho}=0\quad n\ge1.
\end{align*}
Therefore, $A_{n}\to\delta_{n0}\in\{0,1\}$ as $\sigma\to0$.

To illustrate, if the target is the union of $d$-dimensional spheres of radius $r>0$ centered at $M\ge1$ distinct points $z_{1},\dots,z_{M}\in\R^{d}$,
\begin{align*}
\partial\Omega_{\text{T}}
:=\cup_{m=1}^{M}\{x\in\R^{d}:\|x-z_{m}\|=\sigma r\},
\end{align*}
then the principal eigenvalue has the asymptotic behavior \cite{kolokolnikov2005, cheviakov11},
\begin{align*}
\lambda_{0}
\sim
\begin{cases}
-2\pi DM/(|\Omega|\log\sigma) & \text{if }d=2,\\
4\pi DMr\sigma/|\Omega| & \text{if }d=3,
\end{cases}
\quad\text{as }\sigma\to0,
\end{align*}
Hence, the diverging MFPT of a single searcher satisfies
\begin{align*}
Md(\td)^{-1}\E[\tau]
\sim
\begin{cases}
-\ln \sigma & \text{if }d=2,\\
\sigma^{-1} & \text{if }d=3,
\end{cases}
\quad\text{as }\sigma\to0,
\end{align*}
if we define the characteristic diffusion timescale,
\begin{align*}
\td
:=\begin{cases}
|\Omega|/(\pi D) & \text {if }d=2,\\
|\Omega|/(\tfrac{4}{3}\pi rD) & \text{if }d=3.
\end{cases}
\end{align*}
The sufficient condition \eqref{cond2} thus becomes
\begin{align*}
1
&\gg
\begin{cases}
N\sigma^{1/(2MN)} & \text{if }d=2,\\
N\exp\big(-(3MN\sigma)^{-1}\big)  & \text{if }d=3.
\end{cases}
\end{align*}

\subsection{Narrow escape and/or small target reactivity for partial absorption}\label{proto2}

The analysis in Section~\ref{narrow} above quickly extends to the case of partially absorbing targets, assuming the targets are small and/or have low reactivity. Let $\Omega\subset\R^{d}$ be as in Section~\ref{spectral} in dimension $d\ge1$, let $\sigma$ be as in \eqref{sigma} above (if $d=1$, then we set $\sigma=0$ and define $\sigma^{d-1}=1$), and assume \eqref{iso} if $d\ge2$. In this case of an imperfect target, the survival probability again satisfies \eqref{backward}, except the absorbing boundary condition on the target is replaced by the Robin boundary condition,
\begin{align*}
D\tfrac{\partial}{\partial \n}\S
=\kon S,\quad x\in\partial\Omega_{\text{T}},
\end{align*}
where $\kon\in(0,\infty)$ is a parameter describing the reactivity of the target \cite{grebenkov2006}.

Define the dimensionless reactivity,
\begin{align}\label{kappa}
\kappa
:=\frac{\kon L}{D},
\end{align}
for some lengthscale $L>0$. In the limit that the target is small and/or not reactive,
\begin{align*}
\eps:=
\kappa\sigma^{d-1}\to0,
\end{align*}
the survival probability problem \eqref{backward} again approaches the Neumann problem and the analysis in Section~\ref{narrow} applies with the vanishing principal eigenvalue satisfying \cite{lawley2019imp}
\begin{align}\label{pr}
\lambda_{0}
\sim\frac{D}{LL_{0}}\kappa\sigma^{d-1}\quad\text{as }\kappa\sigma^{d-1}\to0,
\end{align}
where $L_{0}=|\Omega|/|\partial\Omega|>0$ is the lengthscale describing the $d$-dimensional volume of the domain to the $(d-1)$-dimensional area of the boundary (if $d=1$, then $L=L_{0}$ is the length of the interval $\Omega$ and we take $\sigma^{d-1}=1$).

Hence, the diverging MFPT of a single searcher satisfies
\begin{align*}
d(\td)^{-1}\E[\tau]
\sim
\frac{1}{\kappa\sigma^{d-1}},\quad\text{as }\kappa\sigma^{d-1}\to0,
\end{align*}
if we define the diffusion time, $\td:=\frac{dLL_{0}}{D}$. The sufficient condition \eqref{cond2} thus becomes
\begin{align*}
1
&\gg
N\exp\big(-(dN\kappa\sigma^{d-1})^{-1}\big).
\end{align*}

\subsection{Escape from a potential well}\label{ou}

In Sections~\ref{narrow} and \ref{proto2} above, the FPT was slow because the target was small and/or the target had a small reactivity. In this subsection, we consider the case that the FPT is slow because the searcher must escape a deep potential well to reach the target.

It is well-known that the Brownian escape time from a potential becomes exponentially distributed with vanishing rate as the potential depth grows \cite{shenoy1984, hanggi1990}. To make the calculations explicit, we consider a quadratic potential, so that $X(t)$ is a $d$-dimensional Ornstein-Uhlenbeck process. Specifically, let $X(t)$ be as in \eqref{sde} in Section~\ref{spectral} where $V:\R^{d}\to\R$ is the quadratic potential,
\begin{align*}
V(x)
:=\frac{\theta}{2}\|x\|^{2},
\end{align*}
where $\|\cdot\|$ denotes the $d$-dimensional Euclidean norm and $\theta>0$ is some positive parameter. Let $\Omega=\{x\in\R^{d}:\|x\|<L\}$ be the $d$-dimensional ball of radius $L>0$ and let the target be the entire boundary $\partial\Omega_{\text{T}}=\partial\Omega=\{x\in\R^{d}:\|x\|=L\}$ (see Figure~\ref{figschem}b). The FPT $\tau>0$ in \eqref{tau} is then
\begin{align*}
\tau
:=\inf\{t>0:\|X(t)\|>L\}.
\end{align*}

In this case, the survival probability can be written in the form \eqref{form} (see equation (67) in \cite{grebenkov2014}) and the FPT becomes exponential in the limit of a deep potential. Specifically, define the dimensionless parameter,
\begin{align*}
\eps
:=\frac{2D}{\theta L^{2}}>0,
\end{align*}
which measures the noise strength $D$ to the potential depth $\theta$ and the escape radius $L>0$. In the limit $\eps\to0$, the escape time $\tau$ is exponentially distributed with rate \cite{grebenkov2014}
\begin{align*}
\lambda_{0}
\sim\frac{4e^{-1/\eps}}{\td\Gamma(d/2)\eps^{d/2+1}}\quad\text{as }\eps\to0,
\end{align*}
where $\td:=L^{2}/D$ and $\Gamma(\cdot)$ denotes the gamma function. Further, the larger eigenvalues diverge as $\lambda_{n}\sim4n(\eps\td)^{-1}$ as $\eps\to0$ for $n\ge1$ \cite{grebenkov2014}.

Hence, the diverging MFPT of a single searcher satisfies
\begin{align*}
(\td)^{-1}\E[\tau]
\sim\frac{\Gamma(d/2)}{4}\eps^{d/2+1}e^{1/\eps}\quad\text{as }\eps\to0.
\end{align*}
The sufficient condition \eqref{cond2} thus becomes
\begin{align*}
1
&\gg
N\exp\big(-\frac{\Gamma(d/2)}{4N}\eps^{d/2+1}e^{1/\eps}\big).
\end{align*}

\subsection{Eigenfunction initial condition}

In Sections~\ref{narrow}-\ref{ou}, the FPT was approximately exponential because the principal eigenvalue $\lambda_{0}$ was much smaller than the diffusion rate $1/\td$. A simple situation in which the FPT is exactly exponential is if the initial searcher distribution is the so-called quasi-stationary distribution \cite{meleard12}. In particular, consider the setup of Section~\ref{spectral} and suppose the distribution of $X(0)$ is given by the product of the weight function in \eqref{rho} and the principal eigenfunction in \eqref{evalproblem},
\begin{align*}
\P(X(0)\in B)
=\frac{1}{\mathcal{N}}\int_{B}u_{0}(x)\rho(x)\,\dd x,
\quad B\subset\Omega,\quad 
\text{where }\mathcal{N}
:=\int_{\Omega}u_{0}(x)\rho(x)\,\dd x.
\end{align*}
That is, suppose the initial searcher position has the probability density function $u_{0}(x)\rho(x)/\mathcal{N}$. Hence, the orthonormality in \eqref{orthonormal} ensures that $A_{n}=\delta_{0n}$ (see \eqref{As}), and thus the series \eqref{form} collapses to 
\begin{align*}
S(t)=e^{-\lambda_{0}t},\quad t\ge0,
\end{align*}
which means that $\tau$ is exactly exponential with mean $\E[\tau]=1/\lambda_{0}$. Hence, the distribution of $T_{k,N}$ is exactly given by the distributions \eqref{genc0}-\eqref{genc} in Proposition~\ref{propgen} for all $N\ge1$. In particular, $T_{N}=_{\dist}\textup{Exp}(\E[\tau]/N)$ for all $N\ge1$, and thus, for example,
\begin{align*}
\E[T_{N}]
=\frac{1}{N\lambda_{0}}
=\frac{\E[\tau]}{N},\quad\text{for all }N\ge1.
\end{align*}

This illustrates that the behavior $\E[T_{N}]\sim\td(4\ln N)^{-1}$ for large $N$ may not hold if \eqref{away} is violated. In fact, we find below that $\E[T_{N}]$ may decay like $N^{-1}$ or $N^{-2}$ for other choices of initial conditions.

\section{Fast escape regime}\label{fast}

In Section~\ref{slow} above, we found that the fastest FPT, $T_{N}$, is approximately exponential if the number of searchers $N$ is sufficiently small, and quantified ``sufficiently small'' in terms of the ratio of the MFPT of a single searcher to a characteristic diffusion timescale. The next natural question is what happens to the distribution of the fastest FPTs in the limit $N\to\infty$.

In this section, we show how to go from the short time behavior of the survival probability of a single FPT, $S(t):=\P(\tau>t)$, to the distribution of $T_{k,N}$ in the limit $N\to\infty$. In particular, assuming that $S(t)$ has the following short time behavior,
\begin{align}\label{sapc}
\P(\tau\le t)
=1-S(t)
\sim At^{p}e^{-C/t}\quad\text{as }t\to0+
\end{align}
for some constants $A>0$, $p\in\R$, and $C\ge0$, we find the distribution and all the moments of $T_{k,N}$ in the limit $N\to\infty$ (the case $C>0$ was handled in \cite{lawley2020dist}). The proofs of the results of this section are collected in the Appendix. 

The short time behavior in \eqref{sapc} holds in many diverse scenarios (see below and also see the Discussion section in \cite{lawley2020dist}). In particular, if the searchers cannot start arbitrarily close to the target (see \eqref{away}), then one typically has that $C=\td/4:=L^{2}/(4D)$, where $D>0$ is the searcher diffusivity and $L>0$ is the shortest distance from the searcher starting locations to the target (this holds for free Brownian motion, and in fact much more general diffusive processes). The parameters $A>0$ and $p\in\R$ in \eqref{sapc} depend on more details in the problem, such as spatial dimension, target size, target reactivity, etc.\ (see, for example, section~\ref{competition} below). However, if the searchers can start arbitrarily close to the target, then we find below that \eqref{sapc} can hold with $C=0$ and $p>0$.


In the results below, the limiting distribution of $T_{k,N}$ is described in terms of Gumbel, Weibull, and generalized Gamma distributions. For convenience, we first give the definitions of these distributions.

\begin{definition*}
A random variable $X\ge0$ has a \emph{Weibull distribution} with scale parameter $t>0$ and shape parameter $p>0$ if
\begin{align}\label{xweibull}
\P(X>x)
=\exp(-(x/t)^{p}),\quad x\ge0.
\end{align}
If \eqref{xweibull} holds, then we write
\begin{align*}
X=_{\textup{d}}\textup{Weibull}(t,p).
\end{align*}
Notice that if \eqref{xweibull} holds with $p=1$, then $X=_{\textup{d}}\textup{Exp}(t)$.

A random variable $X\ge0$ has a \emph{generalized Gamma distribution} with parameters $t>0$, $p>0$, $k>0$ if
\begin{align}\label{gGamma}
\P(X>x)
=\frac{\Gamma(k,(x/t)^{p})}{\Gamma(k)},\quad x\ge0,
\end{align}
where $\Gamma(a,z):=\int_{z}^{\infty}u^{a-1}e^{-u}\,\dd u$ denotes the upper incomplete gamma function. If \eqref{gGamma} holds, then we write
\begin{align*}
X=_{\textup{d}}\textup{gen}\Gamma(t,p,k).
\end{align*}

A random variable $X$ has a \emph{Gumbel distribution} with location parameter $b\in\R$ and scale parameter $a>0$ if\footnote{Some authors define a Gumbel distribution slightly differently, by saying that $-X$ has a Gumbel distribution with shape $-b$ and scale $a$ if \eqref{xgumbel} holds.}
\begin{align}\label{xgumbel}
\P(X>x)
=\exp\Big[-\exp\Big(\frac{x-b}{a}\Big)\Big],\quad\text{for all } x\in\R.
\end{align}
If \eqref{xgumbel} holds, then we write
\begin{align*}
X=_{\textup{d}}\textup{Gumbel}(b,a).
\end{align*}

\end{definition*}


\subsection{The case $C=0$}

If the initial searcher distribution is such that the searchers can start arbitrarily close to the target (meaning \eqref{away} is violated), then the behavior of the survival probability in \eqref{sapc} can hold with $C=0$ and $p>0$, which yields a drastically different distribution of the fastest FPTs compared to the case $C>0$.

The first result below gives the full distribution of $T_{N}$ for large $N$ assuming $C=0$ in \eqref{sapc}. Throughout this work, ``$f\sim g$'' means $f/g\to1$.

\begin{theorem}\label{main}
Let $\{\tau_{n}\}_{n\ge1}$ be an iid sequence of random variables and assume that for some ${{A}}>0$ and $p>0$, we have that
\begin{align}\label{short}
\P(\tau_{n}\le t)
&\sim{{A}} t^{p}\quad\text{as }t\to0+.
\end{align}
The following rescaling of $T_{N}:=\min\{\tau_{1},\dots,\tau_{N}\}$ converges in distribution to a Weibull random variable,
\begin{align*}
(AN)^{1/p}T_{N}
\to_{\dist}
\textup{Weibull}(1,p)\quad\text{as }N\to\infty.
\end{align*}
\end{theorem}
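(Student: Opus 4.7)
The plan is to work directly with the survival function of $T_{N}$ and exploit the independence of the $\tau_{n}$'s. Since $T_{N}=\min\{\tau_{1},\dots,\tau_{N}\}$, for each $x\ge 0$ we have
\begin{align*}
\P\big((AN)^{1/p}T_{N}>x\big)
=\P\big(\tau_{1}>x/(AN)^{1/p}\big)^{N}
=\big(1-\P(\tau_{1}\le x/(AN)^{1/p})\big)^{N}.
\end{align*}
The goal is to show this converges to $\exp(-x^{p})$, which is the survival function of $\textup{Weibull}(1,p)$ from \eqref{xweibull} with $t=1$.

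To do so, I would fix $x\ge 0$ and set $t_{N}:=x/(AN)^{1/p}$, which tends to $0$ as $N\to\infty$. The short-time hypothesis \eqref{short} then gives
\begin{align*}
\P(\tau_{1}\le t_{N})
= A t_{N}^{p}(1+o(1))
= \frac{x^{p}}{N}(1+o(1))\quad\text{as }N\to\infty.
\end{align*}
Plugging this into the survival expression and taking the logarithm,
\begin{align*}
N\ln\Big(1-\tfrac{x^{p}}{N}(1+o(1))\Big)
= N\Big(-\tfrac{x^{p}}{N}(1+o(1))+O(N^{-2})\Big)
\longrightarrow -x^{p},
\end{align*}
where I use the Taylor expansion $\ln(1-y)=-y+O(y^{2})$, valid since $y=x^{p}/N+o(1/N)\to 0$. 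Exponentiating shows $\P((AN)^{1/p}T_{N}>x)\to e^{-x^{p}}$ for every $x\ge 0$. For $x<0$ the left side equals $1=e^{-0^{p}}$ (interpreting the survival function as $1$ there, since $T_{N}\ge 0$), so the limiting distribution function $F(x)=1-e^{-x^{p}}$ for $x\ge 0$ is matched at every continuity point, which is the definition of $\to_{\dist}$ recalled in the text.

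There is no real obstacle here beyond bookkeeping: the only subtlety is that the asymptotic relation \eqref{short} is \emph{pointwise} as $t\to 0+$, so one has to be sure the $o(1)$ can be evaluated along the sequence $t_{N}\to 0$, which is immediate because \eqref{short} is equivalent to $\P(\tau_{1}\le t)/(At^{p})\to 1$. No uniformity is needed since $x$ is fixed throughout the argument.
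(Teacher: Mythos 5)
Your proof is correct, but it takes a different route than the paper. The paper's proof sets $X_{n}=-\tau_{n}$, converts the minimum into a maximum, verifies the regular-variation condition $\lim_{t\to0+}\frac{1-F(-ty)}{1-F(-t)}=y^{p}$ at the upper endpoint, and then invokes the classical extreme-value machinery (Theorem 1.2.1 and Corollary 1.2.4 of de Haan and Ferreira) to conclude convergence to the Weibull law with normalization $a_{N}=(AN)^{-1/p}$. You instead compute directly: $\P((AN)^{1/p}T_{N}>x)=\bigl(1-\P(\tau_{1}\le x/(AN)^{1/p})\bigr)^{N}$, substitute the short-time asymptotic $\P(\tau_{1}\le t_{N})=\frac{x^{p}}{N}(1+o(1))$, and pass to the limit via $\ln(1-y)=-y+O(y^{2})$; your handling of the pointwise-in-$x$ nature of the limit (so no uniformity in \eqref{short} is needed) is exactly the right observation, and the $x\le0$ case is harmless since \eqref{short} forces $\P(\tau_{1}\le0)=0$, hence $T_{N}>0$ almost surely. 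What your elementary argument buys is self-containedness and transparency about where the hypothesis enters; what the paper's route buys is that the same domain-of-attraction framework simultaneously identifies the canonical normalizing sequence and feeds into the companion results (the moment asymptotics and the $k$th order statistic via the cited order-statistics and moment-convergence theorems), which would require separate arguments in your approach.
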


Roughly speaking, Theorem~\ref{main} means that the distribution of $T_{N}$ is
\begin{align*}
T_{N}\approx_{\dist}\textup{Weibull}\big((AN)^{-1/p},p\big)\quad\text{for $N$ sufficiently large}.
\end{align*}
The next result approximates all the moments of the fastest FPT.

\begin{theorem}\label{moments}
Under the assumptions of Theorem~\ref{main}, suppose further that
\begin{align*}
\E[T_{N}]<\infty\quad\text{for some }N\ge1.
\end{align*}
Then for each moment $m\in(0,\infty)$, we have that
\begin{align*}
\E[(T_{N})^{m}]
&\sim \frac{\Gamma(1+m/p)}{(AN)^{m/p}}\quad\text{as }N\to\infty.
\end{align*}
Hence,
\begin{align*}
\E[T_{N}]
&\sim \frac{\Gamma(1+1/p)}{(AN)^{1/p}}\quad\text{as }N\to\infty,\\
\textup{Variance}(T_{N})
&\sim\frac{\Gamma(1+2/p)
-\big(\Gamma(1+1/p)\big)^{2}}{(AN)^{2/p}}
\quad\text{as }N\to\infty.
\end{align*}
\end{theorem}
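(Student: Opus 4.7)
The plan is to pass from the convergence in distribution in Theorem~\ref{main} to convergence of moments via uniform integrability. By the continuous mapping theorem, $((AN)^{1/p}T_N)^m\to_{\dist}W^m$ for $W=_{\dist}\textup{Weibull}(1,p)$, and the substitution $u=s^p$ in $\E[W^m]=\int_0^\infty m s^{m-1}e^{-s^p}\,ds$ recovers the target constant $\Gamma(1+m/p)$. Hence it suffices to show that the family $\{((AN)^{1/p}T_N)^m\}_N$ is uniformly integrable (once $N$ is large enough that $\E[T_N^m]$ is finite), and I would do this by proving the stronger uniform bound
\begin{align*}
\sup_{N\ge N^*}(AN)^{m'/p}\,\E[T_N^{m'}]<\infty\quad\text{for some fixed }m'>m.
\end{align*}

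To establish this bound, I would write $(AN)^{m'/p}\E[T_N^{m'}]=(AN)^{m'/p}\,m'\int_0^\infty t^{m'-1}\P(T_N>t)\,dt$ and split the integral at a small threshold $t_0>0$. For the small-$t$ piece, the hypothesis \eqref{short} yields $\P(\tau\le t)\ge (A/2)t^p$ for $t\le t_0$ (after $t_0$ is chosen small), so $\P(T_N>t)=(1-\P(\tau\le t))^N\le e^{-(A/2)Nt^p}$; substituting $u=(A/2)Nt^p$ then bounds this contribution to $(AN)^{m'/p}\E[T_N^{m'}]$ by $2^{m'/p}\,\Gamma(1+m'/p)$, uniformly in $N$. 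For the tail piece, $\beta:=\P(\tau>t_0)<1$ again by \eqref{short}, so $\P(T_N>t)\le\beta^{N-N_1}\P(T_{N_1}>t)$ for $t>t_0$ and any $N_1\le N$; the tail contribution is therefore at most $(AN)^{m'/p}\beta^{N-N_1}\E[T_{N_1}^{m'}]$, which decays exponentially as $N\to\infty$ provided $\E[T_{N_1}^{m'}]<\infty$.

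The main obstacle is that the hypothesis only supplies $\E[T_{N_0}]<\infty$ for a single first moment, while the preceding tail argument requires some $N_1$ with a finite $m'$-th moment. To bridge this gap, I would exploit the fact that $T_{kN_0}$ equals the minimum of $k$ independent copies of $T_{N_0}$, so Markov's inequality gives
\begin{align*}
\P(T_{kN_0}>t)=\P(T_{N_0}>t)^k\le\bigl(\E[T_{N_0}]/t\bigr)^k\quad\text{for }t>\E[T_{N_0}];
\end{align*}
choosing $k>m'$ then yields $\E[T_{kN_0}^{m'}]<\infty$, and setting $N_1:=kN_0$ closes the loop. The stated mean and variance asymptotics then follow by specializing to $m=1,2$ and writing $\textup{Variance}(T_N)=\E[T_N^2]-(\E[T_N])^2$, where both terms are of order $(AN)^{-2/p}$ with combined leading coefficient $\Gamma(1+2/p)-\bigl(\Gamma(1+1/p)\bigr)^2$.
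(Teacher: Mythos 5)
Your proof is correct, but it takes a genuinely different route from the paper: the paper does not prove Theorem~\ref{moments} internally at all --- its proof is a one-line citation of Theorem~4 in \cite{lawley2020pdmp} --- whereas you give a self-contained argument. Your structure (continuous mapping plus uniform integrability, reduced to the uniform bound $\sup_{N\ge N^{*}}(AN)^{m'/p}\,\E[T_{N}^{m'}]<\infty$ for some $m'>m$) is sound, and the splitting at $t_{0}$ does deliver it: below $t_{0}$ the hypothesis \eqref{short} gives $\P(\tau\le t)\ge (A/2)t^{p}$, hence $\P(T_{N}>t)\le e^{-(A/2)Nt^{p}}$ and the uniform constant $2^{m'/p}\Gamma(1+m'/p)$; above $t_{0}$ the factor $\beta^{N-N_{1}}$ with $\beta=\P(\tau>t_{0})<1$ kills the polynomially growing prefactor. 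The bootstrap is the key original touch: the hypothesis only supplies $\E[T_{N_{0}}]<\infty$ for a single $N_{0}$, and your observation that $T_{kN_{0}}$ is the minimum of $k$ iid copies of $T_{N_{0}}$, so that Markov's inequality yields $\P(T_{kN_{0}}>t)\le(\E[T_{N_{0}}]/t)^{k}$ and hence $\E[T_{kN_{0}}^{m'}]<\infty$ once $k>m'$, is exactly what is needed to anchor the tail estimate. Two small points you should make explicit: positivity of $\tau$ (hence $T_{N}\ge0$) follows from $\P(\tau\le t)\to0$ as $t\to0+$, which is what licenses the layer-cake formula and Markov's inequality; and in the variance asymptotics both $\E[T_{N}^{2}]$ and $(\E[T_{N}])^{2}$ are of order $(AN)^{-2/p}$, so you should note that the combined coefficient $\Gamma(1+2/p)-\big(\Gamma(1+1/p)\big)^{2}$ is strictly positive (it is the variance of a $\textup{Weibull}(1,p)$ random variable), so there is no cancellation of leading terms. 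What the paper's citation buys is brevity; what your argument buys is a complete proof within the present setting, essentially of the type one expects to underlie the cited result.
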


\begin{remark}\label{remarksuff1}
We are interested in estimating when a particular system is in the large $N$ regime of Theorems~\ref{main} and \ref{moments}. By Theorem~\ref{moments}, we are assured that a system is in this large $N$ regime, at least for all moments $m\ge1$, if
\begin{align}\label{condgood}
\frac{1}{(AN)^{1/p}}\ll1.
\end{align}
\end{remark}

We now generalize Theorem~\ref{main} on the fastest FPT to the $k$th fastest FPT,
\begin{align}\label{tkn}
T_{k,N}
:=\min\big\{\{\tau_{1},\dots,\tau_{N}\}\backslash\cup_{j=1}^{k-1}\{T_{j,N}\}\big\},\quad k\in\{1,\dots,N\},
\end{align}
where $T_{1,N}:=T_{N}$. Indeed, some physical scenarios depend not on the fastest FPT, but rather the $k$th fastest FPT for $k\ge2$ \cite{schuss2019} (see \cite{basnayake2019fast} for the case $k=2$ for calcium-induced calcium release in dendritic spines). The following theorem gives the distribution of $T_{k,N}$ for large $N$. 

\begin{theorem}\label{kth}
Under the assumption of Theorem~\ref{main}, the following rescaling of $T_{k,N}$ in \eqref{tkn} converges in distribution to a generalized Gamma random variable,
\begin{align*}
(AN)^{1/p}T_{k,N}
\to_{\dist}
\textup{gen}\Gamma(1,p,k)\quad\text{as }N\to\infty.
\end{align*}
\end{theorem}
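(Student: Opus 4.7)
The plan is to generalize the proof of Theorem~\ref{main} to the $k$th order statistic by working directly with the explicit formula for its survival function. By independence of $\{\tau_{n}\}_{n=1}^{N}$, if we write $F(t) := \P(\tau_{1} \le t)$, then
\begin{equation*}
\P(T_{k,N} > t) = \sum_{j=0}^{k-1} \binom{N}{j} F(t)^{j} (1-F(t))^{N-j}.
\end{equation*}
I would substitute $t = x/(AN)^{1/p}$ for fixed $x > 0$. By the short-time asymptotic \eqref{short}, $F(t) \sim A t^{p} = x^{p}/N$, so $F(t) \to 0$ and $N F(t) \to x^{p}$ as $N \to \infty$.

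Next, for each fixed $j \in \{0,1,\ldots,k-1\}$ I would analyze the three factors of the $j$th summand separately: $\binom{N}{j} = N(N-1)\cdots(N-j+1)/j! \sim N^{j}/j!$; $F(t)^{j} \sim (x^{p}/N)^{j}$, so $\binom{N}{j} F(t)^{j} \to x^{pj}/j!$; and writing $(1-F(t))^{N-j} = \exp[(N-j)\log(1-F(t))]$ with $\log(1-F(t)) \sim -F(t)$ gives $(N-j)\log(1-F(t)) \to -x^{p}$, so $(1-F(t))^{N-j} \to e^{-x^{p}}$. Since $k$ is fixed and the sum contains only $k$ terms, combining these termwise limits yields
\begin{equation*}
\P\bigl((AN)^{1/p} T_{k,N} > x\bigr) \to \sum_{j=0}^{k-1} \frac{x^{pj}}{j!} e^{-x^{p}} \quad \text{as } N \to \infty.
\end{equation*}

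The final step is to identify the right-hand side with the survival function of $\textup{gen}\Gamma(1,p,k)$ as defined in \eqref{gGamma}. For any positive integer $k$, the standard identity (obtained by repeated integration by parts) $\Gamma(k,z) = (k-1)!\, e^{-z} \sum_{j=0}^{k-1} z^{j}/j!$, together with $\Gamma(k) = (k-1)!$, gives $\Gamma(k, x^{p})/\Gamma(k) = \sum_{j=0}^{k-1} x^{pj} e^{-x^{p}}/j!$, matching the display above. This establishes the convergence of $\P((AN)^{1/p} T_{k,N} > x)$ to the survival function of $\textup{gen}\Gamma(1,p,k)$ at every $x > 0$, which together with the trivial agreement for $x \le 0$ completes the argument. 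The underlying conceptual point is that the rescaled FPTs $\{(AN)^{1/p}\tau_{n}\}$ converge, as a point process on $(0,\infty)$, to a Poisson point process of intensity $p y^{p-1}\,\dd y$, so after rescaling $T_{k,N}$ becomes the $k$th arrival time of that process; no step presents a serious obstacle once Theorem~\ref{main} is in hand.
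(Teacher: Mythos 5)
Your proof is correct. The paper itself disposes of this theorem in one line, by combining Theorem~\ref{main} with Theorem~3.5 of \cite{colesbook}, the general extreme-value result that upgrades convergence of the (rescaled) minimum to convergence of the $k$th order statistic. You instead give a direct, self-contained computation: you start from the exact survival formula $\P(T_{k,N}>t)=\sum_{j=0}^{k-1}\binom{N}{j}F(t)^{j}(1-F(t))^{N-j}$ (the same identity the paper uses elsewhere, in its derivation of \eqref{cond2}), substitute $t=x/(AN)^{1/p}$, use $F(t)\sim x^{p}/N$ from \eqref{short}, and pass to the limit term by term, which is legitimate since $k$ is fixed; the identification of $\sum_{j=0}^{k-1}x^{pj}e^{-x^{p}}/j!$ with $\Gamma(k,x^{p})/\Gamma(k)$ via the finite-sum form of the upper incomplete gamma function for integer $k$ correctly matches the definition \eqref{gGamma}, and the limiting distribution function is continuous everywhere, so convergence in distribution follows. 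In effect you have reproved the special case of the cited order-statistics theorem (your argument is essentially the Poisson/binomial computation underlying it). What your route buys is transparency and independence from the extreme-value-theory reference, plus an explicit formula for the limit; what the paper's route buys is brevity and the fact that the same citation handles general $k$ uniformly once the $k=1$ limit is known. Your closing remark about the Poisson point process limit is a correct heuristic but is not needed for the argument.
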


Roughly speaking, Theorem~\ref{kth} means that the distribution of $T_{k,N}$ is
\begin{align*}
T_{k,N}
\approx_{\dist}
\textup{gen}\Gamma\big((AN)^{-1/p},p,k\big)\quad\text{for $N$ sufficiently large}.
\end{align*}
The next result approximates all the moments of the $k$th fastest FPT as $N\to\infty$.

\begin{theorem}\label{kth moment}
Under the assumptions of Theorem~\ref{moments}, for each moment $m\in(0,\infty)$, we have that
\begin{align*}
\E[(T_{k,N})^{m}]
&\sim\frac{\Gamma(k+m/p)/\Gamma(k)}{(AN)^{m/p}}
\quad\text{as }N\to\infty.
\end{align*}
Hence,
\begin{align*}
\E[T_{k,N}]
&\sim\frac{\Gamma(k+1/p)/\Gamma(k)}{(AN)^{1/p}}
\quad\text{as }N\to\infty,\\
\textup{Variance}(T_{k,N})
&\sim\frac{\Gamma(k+2/p)/\Gamma(k)
-\big(\Gamma(k+1/p)/\Gamma(k)\big)^{2}}{(AN)^{2/p}}
\quad\text{as }N\to\infty.
\end{align*}
\end{theorem}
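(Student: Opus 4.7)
The plan is to promote the distributional limit from Theorem~\ref{kth} to a limit of $m$th moments by establishing uniform integrability. If $Y=_{\dist}\textup{gen}\Gamma(1,p,k)$ has density $f_Y(y)=(p/\Gamma(k))\,y^{pk-1}e^{-y^p}$ for $y\ge0$, then the substitution $u=y^p$ yields
\[
\E[Y^m]=\frac{p}{\Gamma(k)}\int_0^\infty y^{m+pk-1}e^{-y^p}\,\dd y=\frac{1}{\Gamma(k)}\int_0^\infty u^{k+m/p-1}e^{-u}\,\dd u=\frac{\Gamma(k+m/p)}{\Gamma(k)},
\]
which is the target constant. Since $(AN)^{1/p}T_{k,N}\to_{\dist}Y$ by Theorem~\ref{kth}, it suffices to show that $\{((AN)^{1/p}T_{k,N})^m\}_N$ is uniformly integrable, for instance by producing a uniform bound on its $(m+\delta)$th moment for some $\delta>0$.

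The key estimate is a divide-and-conquer bound on $T_{k,N}$. Partition the $N$ searchers into $k$ disjoint groups of size $M:=\lfloor N/k\rfloor$ (discarding up to $k-1$ indices), and let $\widehat{T}_1,\dots,\widehat{T}_k$ denote the within-group minima, which are iid copies of $T_M=\min\{\tau_1,\dots,\tau_M\}$. I claim $T_{k,N}\le\max_j\widehat{T}_j$: otherwise every group would contain a searcher with FPT strictly below $T_{k,N}$, producing $\ge k$ values below $T_{k,N}$ and contradicting its definition as the $k$th smallest. Hence, for any $\delta>0$,
\[
\E[T_{k,N}^{m+\delta}]\le\E\Big[\big(\max_j\widehat{T}_j\big)^{m+\delta}\Big]\le\sum_{j=1}^k\E[\widehat{T}_j^{m+\delta}]=k\,\E[T_M^{m+\delta}].
\]
The hypothesis inherited from Theorem~\ref{moments} gives $\E[T_{N_0}]<\infty$ for some $N_0$, and since $T_M\le T_{N_0}$ whenever $M\ge N_0$ (more searchers only decrease the minimum), this transfers to $T_M$. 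Theorem~\ref{moments} thus applies at scale $M$ and gives $\E[T_M^{m+\delta}]\sim\Gamma(1+(m+\delta)/p)/(AM)^{(m+\delta)/p}$; multiplying by $(AN)^{(m+\delta)/p}$ and using $N/M\to k$ produces a bound on $(AN)^{(m+\delta)/p}\E[T_{k,N}^{m+\delta}]$ that is uniform in $N$, which is the required uniform integrability.

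Convergence in distribution plus uniform integrability then yields $\E[((AN)^{1/p}T_{k,N})^m]\to\Gamma(k+m/p)/\Gamma(k)$, which rearranges to the claimed asymptotic; the formulas for $\E[T_{k,N}]$ and $\textup{Variance}(T_{k,N})$ are the special cases $m=1$ and the combination $m\in\{1,2\}$. The main obstacle is the uniform integrability step, because one must invoke Theorem~\ref{moments} at the sub-population scale $M=\lfloor N/k\rfloor$ rather than at scale $N$; the divide-and-conquer bound $T_{k,N}\le\max_j\widehat{T}_j$ is precisely what legitimizes this reduction, and everything else reduces to the moment computation for the generalized gamma density.
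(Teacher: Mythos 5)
Your proposal is correct, and it is genuinely a different route from the paper's: the paper gives no in-text argument at all, simply invoking Theorem~7 of the reference \cite{lawley2020pdmp} (the same external source used for Theorem~\ref{moments}), where the $k$th-order-statistic moment asymptotics are established directly from the tail representation $\P(T_{k,N}>t)=\sum_{j=0}^{k-1}\binom{N}{j}\P(\tau\le t)^{j}\P(\tau>t)^{N-j}$. Your argument instead stays inside the toolkit already available in this paper: the generalized-Gamma moment computation $\E[Y^{m}]=\Gamma(k+m/p)/\Gamma(k)$ is right (your substitution $u=y^{p}$ checks out against the density $\tfrac{p}{\Gamma(k)}y^{pk-1}e^{-y^{p}}$ implied by \eqref{gGamma}), the convergence in distribution comes from Theorem~\ref{kth}, and the only new ingredient is the uniform integrability, which you obtain from the clean grouping bound $T_{k,N}\le\max_{j}\widehat{T}_{j}$ over $k$ disjoint blocks of size $M=\lfloor N/k\rfloor$; that inequality is valid (the $k$ block minima are $k$ distinct sample values, all at most the maximum, so the $k$th smallest of the full sample cannot exceed it), and combined with $\E[(\max_{j}\widehat{T}_{j})^{m+\delta}]\le k\,\E[T_{M}^{m+\delta}]$ and Theorem~\ref{moments} applied at scale $M\to\infty$ (its integrability hypothesis transfers since $T_{M}\le T_{N_{0}}$ for $M\ge N_{0}$), it gives $\sup_{N\ \mathrm{large}}(AN)^{(m+\delta)/p}\E[T_{k,N}^{m+\delta}]\lesssim k^{1+(m+\delta)/p}\Gamma(1+(m+\delta)/p)<\infty$, which is exactly the uniform integrability needed; boundedness for large $N$ only is enough for the asymptotic claim. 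What your approach buys is a self-contained proof that reduces the $k$th order statistic to the already-proved minimum case rather than re-estimating its tail; what the paper's citation buys is brevity and a statement proved once in full generality in the companion work. Two minor points you leave implicit but which are harmless: the variance asymptotic requires the limiting constant $\Gamma(k+2/p)/\Gamma(k)-(\Gamma(k+1/p)/\Gamma(k))^{2}$ to be nonzero, which holds because it is the variance of the nondegenerate limit law, and the passage from convergence in distribution plus uniform integrability to convergence of $m$th moments is the standard Vitali/Skorokhod argument.
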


\subsection{The case $C>0$}

The case that $C>0$ in \eqref{sapc} was handled in \cite{lawley2020dist} and characterizes the case that the searchers cannot start arbitrarily close to the target (see \eqref{away}). For convenience, we repeat the result here in the case $k=1$ (the case of a general $k$ was also handled in \cite{lawley2020dist} but we omit it for brevity).

\begin{theorem}\label{repeat}[Proven in Reference~\cite{lawley2020dist}]
Let $\{\tau_{n}\}_{n\ge1}$ be an iid sequence of nonnegative random variables, and assume that there exists constants $C>0$, $A>0$, and $p\in\R$ so that
\begin{align*}
\P(\tau_{1}\le t)
\sim At^{p}e^{-C/t}\quad\text{as }t\to0+.
\end{align*}
The following rescaling of $T_{N}:=\min\{\tau_{1},\dots,\tau_{N}\}$ converges in distribution to a Gumbel random variable,
\begin{align}\label{cd}
\frac{T_{N}-b_{N}}{a_{N}}
\to_{\textup{d}}
X=_{\textup{d}}\textup{Gumbel}(0,1)\quad\text{as }N\to\infty,
\end{align}
where 
\begin{align}\label{ababuf}
\begin{split}
a_{N}
&=\frac{C}{(\ln N)^{2}},\quad
b_{N}
=\frac{C}{\ln N}
+\frac{Cp\ln(\ln(N))}{(\ln N)^{2}}
-\frac{C\ln(AC^{p})}{(\ln N)^{2}}.
\end{split}
\end{align}
\end{theorem}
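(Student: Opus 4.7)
\medskip

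\noindent\textbf{Proof proposal.} Since the $\tau_n$ are iid, the survival function of the minimum factors as
\begin{align*}
\P(T_N>t)=\big(1-F(t)\big)^{N},\qquad F(t):=\P(\tau_1\le t).
\end{align*}
Convergence to $\textup{Gumbel}(0,1)$ in the sense of \eqref{cd} is, by the paper's convention \eqref{xgumbel} and the continuity of the Gumbel cdf, equivalent to pointwise convergence
\begin{align*}
\P(T_N>b_N+a_Nx)\;\longrightarrow\;\exp(-e^{x})\qquad\text{for every }x\in\R.
\end{align*}
So the plan is to set $t_N:=t_N(x)=b_N+a_Nx$, show that $F(t_N)\to0$ uniformly on compacts in $x$, and then reduce the problem to the single asymptotic statement
\begin{align}\label{plan:red}
N\,F(t_N)\;\longrightarrow\;e^{x}\qquad\text{as }N\to\infty.
\end{align}
Once \eqref{plan:red} is established, the Taylor expansion $\log(1-F(t_N))=-F(t_N)+O(F(t_N)^{2})$ combined with $N F(t_N)^{2}=(NF(t_N))\cdot F(t_N)\to0$ yields $N\log(1-F(t_N))\to-e^{x}$, and exponentiating gives the theorem.

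\smallskip

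The heart of the proof is the asymptotic evaluation of \eqref{plan:red}. Using the hypothesis $F(t)\sim At^{p}e^{-C/t}$ as $t\to0+$, take logarithms to get
\begin{align*}
\log\bigl(NF(t_N)\bigr)\;=\;\log N+\log A+p\log t_N-\frac{C}{t_N}+o(1),
\end{align*}
so the task is to show this expression tends to $x$. Write $L:=\log N$ and, following \eqref{ababuf}, expand
\begin{align*}
t_N=\frac{C}{L}\Bigl(1+\frac{p\log L-\log(AC^{p})+x}{L}+O\bigl(L^{-2}\bigr)\Bigr),
\end{align*}
so that $\tfrac{1}{t_N}=\tfrac{L}{C}-\tfrac{p\log L-\log(AC^{p})+x}{C}+O(L^{-1})$ and $\log t_N=\log C-\log L+o(1)$. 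Substituting back,
\begin{align*}
\log\bigl(NF(t_N)\bigr)
&=L+\log A+p\bigl(\log C-\log L\bigr)-L+\bigl(p\log L-\log(AC^{p})+x\bigr)+o(1)\\
&=x+o(1),
\end{align*}
because $\log A+p\log C-\log(AC^{p})=0$. This is precisely the calibration that motivates the specific choice of $a_N$ and $b_N$ in \eqref{ababuf}, and establishes \eqref{plan:red}.

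\smallskip

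The main technical obstacle is keeping enough terms in the expansion of $1/t_N$: the naive approximation $1/t_N\approx L/C$ only handles the leading cancellation of $\log N$ against $-C/t_N$, leaving an unbounded residual $p\log L$ that must be killed by the second term of $b_N$, and a constant residual $\log(AC^{p})$ that must be killed by the third term. Thus one has to carry the expansion of $1/t_N$ through the $O(1/L)$ level while showing the $O(L^{-2})$ remainder is absorbed into the $o(1)$. A secondary, minor issue is verifying that $F(t_N)\to0$ (so that the Taylor expansion of $\log(1-F)$ is justified); this is immediate from $t_N\to0$ together with the hypothesized short-time asymptotics. No other technical input is needed: the argument is pure extreme-value-type asymptotics combined with a careful two-term expansion of the exponential rate in the short-time tail.
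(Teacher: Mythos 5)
Your argument is correct. Note that the paper itself does not prove this theorem---it is quoted from Reference~\cite{lawley2020dist}---and your calibration argument (reduce Gumbel convergence under the paper's convention \eqref{xgumbel} to $N\,F(b_{N}+a_{N}x)\to e^{x}$, then expand $-C/t_{N}$ and $p\log t_{N}$ so that the $\ln N$, $\ln\ln N$, and constant terms cancel against $\log N+\log A$) is exactly the standard route one would take, and matches the structure of the cited proof. One small imprecision: with $L=\ln N$, the remainder in $1/t_{N}$ after the two retained terms is $O\bigl((\ln L)^{2}/L\bigr)$, not $O(L^{-1})$ as written; this is still $o(1)$, so the conclusion $\log\bigl(NF(t_{N})\bigr)=x+o(1)$ and hence the theorem are unaffected.
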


The next result allows approximates all the moments of the fastest FPT.

\begin{theorem}\label{repeat2}[Proven in Reference~\cite{lawley2020dist}]
Under the assumptions of Theorem~\ref{repeat}, suppose further that $\E[T_{N}]<\infty$ for some $N\ge1$. Then for each moment $m\in(0,\infty)$, we have that
\begin{align*}
\E[(T_{N}-b_{N})^{m}]
\sim a_{N}^{m}\E[X^{m}]\quad\text{as }N\to\infty,
\end{align*}
where $X=_{\dist}\textup{Gumbel}(0,1)$. In particular,
\begin{align}
\E[T_{N}]
&=b_{N}-\gamma a_{N}+o(a_{N})\nonumber\\
&=\frac{C}{\ln N}
\Big[1
+\frac{p\ln(\ln(N))}{\ln N}
-\frac{\ln(AC^{p})+\gamma}{\ln N}
+o\big(1/\ln N\big)\Big],\label{meanlog}\\
\textup{Variance}(T_{N})
&\sim\frac{\pi^{2}}{6}a_{N}^{2}
=\frac{\pi^{2}}{6}\frac{C^{2}}{(\ln N)^{4}}\quad\text{as }N\to\infty,\nonumber
\end{align}
where $\gamma\approx0.5772$ is the Euler-Mascheroni constant.
\end{theorem}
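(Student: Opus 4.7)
The plan is to upgrade the distributional limit of Theorem~\ref{repeat} to convergence of moments. Writing $Y_N := (T_N - b_N)/a_N$, Theorem~\ref{repeat} gives $Y_N \to_{\dist} X$ with $X =_{\dist} \textup{Gumbel}(0,1)$, and the desired statement $\E[(T_N - b_N)^m] \sim a_N^m \E[X^m]$ is equivalent to $\E[Y_N^m] \to \E[X^m]$. Since convergence in distribution alone does not imply convergence of moments, the crux of the argument is to establish uniform integrability (UI) of $\{|Y_N|^{m}\}$ for $N$ large. A standard sufficient criterion is $\sup_{N \ge N_0} \E[|Y_N|^{m+\delta}] < \infty$ for some $\delta > 0$; once in hand, moment convergence follows from the usual UI-plus-weak-convergence theorem.

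To show UI I would bound the two tails of $Y_N$ separately. For the right tail, $\P(Y_N > y) = S(b_N + a_N y)^N$. The short-time expansion \eqref{sapc} forces $S(\delta) < 1$ for every $\delta > 0$, so for $N$ large enough that $b_N < \delta$, one has $\P(Y_N > y) \le S(\delta)^N$ whenever $b_N + a_N y \ge \delta$, yielding geometric decay in $N$. Combined with the hypothesis $\E[T_{N_0}] < \infty$ for some $N_0$ (which forces the full tail of $T_N$ for $N \ge N_0$ to decay), this bounds the right-tail contribution to $\E[|Y_N|^{m+\delta}]$ uniformly in $N$. For the left tail ($y < 0$), substitute the short-time expansion into $\P(Y_N \le y) = 1 - S(b_N + a_N y)^N$. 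Using the explicit $a_N,b_N$ in \eqref{ababuf}, direct computation shows
\begin{align*}
N\bigl(1 - S(b_N + a_N y)\bigr) \;\sim\; N A (b_N + a_N y)^p \exp\!\big(-C/(b_N + a_N y)\big) \;\to\; e^{-y},
\end{align*}
so $\P(Y_N \le y) \to 1 - \exp(-e^{-y})$, matching the left tail of the Gumbel. A quantitative version of the expansion yields $\P(Y_N \le y) \le C_0 \exp(-c_0 e^{-y})$ uniformly in $N$ for $y$ sufficiently negative, which controls the left-tail contribution to $\E[|Y_N|^{m+\delta}]$.

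With UI established, $Y_N \to_{\dist} X$ upgrades to $\E[Y_N^m] \to \E[X^m]$, i.e.\ $\E[(T_N - b_N)^m] \sim a_N^m \E[X^m]$. To extract the concrete mean and variance asymptotics, use the moments of the minimum-convention Gumbel \eqref{xgumbel}: the substitution $u = e^x$ in $\int x\,e^x \exp(-e^x)\,\dd x$ gives $\E[X] = -\gamma$ and $\textup{Var}(X) = \pi^2/6$. Then $\E[T_N] = b_N + a_N\E[X] + o(a_N) = b_N - \gamma a_N + o(a_N)$, and expanding $b_N$ from \eqref{ababuf} yields \eqref{meanlog}. The variance identity $\textup{Var}(T_N) = \E[(T_N - b_N)^2] - (\E[T_N] - b_N)^2 \sim a_N^2 \textup{Var}(X)$ then delivers $\pi^2 C^2/(6(\ln N)^4)$.

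The main obstacle is the right-tail control: hypothesis \eqref{sapc} provides no information about $S(t)$ for $t$ bounded away from $0$, so uniform-in-$N$ decay of $\P(Y_N > y)$ at rate sufficient for $|Y_N|^{m+\delta}$ must be extracted purely from the finite-moment assumption together with the power $S^N$. The delicate step is choosing $N_0 = N_0(m, \delta)$ large enough that the large-$t$ tail of $T_{N_0}$ is integrable to order $m + \delta$, which is the real content hidden inside the mild hypothesis $\E[T_N] < \infty$ for some $N \ge 1$.
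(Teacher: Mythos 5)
First, note that this paper does not prove Theorem~\ref{repeat2} at all: it is imported verbatim from Reference~\cite{lawley2020dist}. Your route --- upgrade the Gumbel convergence of $Y_N=(T_N-b_N)/a_N$ to moment convergence via uniform integrability, with separate control of the two tails of $Y_N$, then read off the mean and variance from the moments of the limiting Gumbel --- is in fact the same strategy used in that reference, so the architecture is sound and your final formulas ($\E[X]=-\gamma$, $\textup{Var}(X)=\pi^2/6$, the expansion of $b_N-\gamma a_N$) are correct.

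Two concrete issues remain. (i) You have the Gumbel orientation flipped in the tail computation: with $a_N,b_N$ as in \eqref{ababuf}, a direct expansion gives $N\bigl(1-S(b_N+a_Ny)\bigr)\to e^{y}$ (not $e^{-y}$), hence $\P(Y_N>y)\to\exp(-e^{y})$, consistent with the paper's convention $\P(X>x)=\exp(-e^{x})$ and with your own later computation $\E[X]=-\gamma$. Consequently the claimed uniform bound $\P(Y_N\le y)\le C_0\exp(-c_0e^{-y})$ is false as stated (it is far too strong); the true left tail only decays like $e^{y}$ as $y\to-\infty$, e.g.\ via $\P(Y_N\le y)\le N\bigl(1-S(b_N+a_Ny)\bigr)\le C_0e^{c_0y}$ for $y$ in the admissible range (recall $Y_N\ge -b_N/a_N$), and that exponential decay is all you need for the moments. (ii) The right-tail step you flag as ``the real content'' is left open, but it closes in one line and you should supply it: $\E[T_{N_0}]<\infty$ means $\int_0^\infty S(t)^{N_0}\,\dd t<\infty$, and since $S^{N_0}$ is nonincreasing this forces $t\,S(t)^{N_0}\to0$, so $S(t)^{kN_0}\le (c/t)^{k}$; choosing the integer $k>m+\delta$ gives $\E[(T_{kN_0})^{m+\delta}]<\infty$, and for $N\ge kN_0$ the splitting $S(t)^{N}\le S(\delta)^{N-kN_0}S(t)^{kN_0}$ for $t\ge\delta$ makes the geometric factor absorb the $a_N^{-(m+\delta)}=O\bigl((\ln N)^{2(m+\delta)}\bigr)$ growth coming from the rescaling, yielding $\sup_{N\ge kN_0}\E[|Y_N|^{m+\delta}]<\infty$. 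With these two repairs your proposal is a complete proof along the lines of the cited reference.
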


\begin{remark}\label{remarksuff2}
We are interested in estimating when a particular system is in the large $N$ regime of Theorems~\ref{repeat} and \ref{repeat2}. By Theorem~\ref{repeat2}, we are assured that a system is in this large $N$ regime, at least for all moments $m\ge1$, if
\begin{align*}
\Big|\frac{\ln(AC^{p})+\gamma}{\ln N}\Big|\ll1.
\end{align*}
\end{remark}

\section{Competition between slow and fast escape}\label{competition}

In this section, we use the results of Sections~\ref{slow} and \ref{fast} to investigate (i) the competition between slow and fast escape ($\eps\to0$ versus $N\to\infty)$, (ii) the effects of initial conditions, and (iii) the effects of target reactivity. Consider the $d$-dimensional annular domain,
\begin{align*}
\Omega
:=\{x\in\R^{d}:a<\|x\|<{{R}}\},\quad d\in\{1,2,3\},
\end{align*}
where $\|\cdot\|$ denotes the Euclidean length. We take $a=0$ in dimension $d=1$ and $a\in(0,R)$ in dimensions $d\in\{2,3\}$. The boundary $\partial\Omega=\partial\Omega_{\text{T}}\cup\partial\Omega_{\text{R}}$ consists of the target at the inner boundary, $\partial\Omega_{\text{T}}:=\{x\in\R^{d}:\|x\|=a\}$, and the outer boundary, $\partial\Omega_{\text{R}}:=\{x\in\R^{d}:\|x\|={{R}}\}$. Let $\{X(t)\}_{t\ge0}$ denote the path of a searcher with diffusivity $D>0$ diffusing in $\Omega$ with reflecting boundary conditions on $\partial\Omega$.

Due to the symmetry in the problem, the survival probability,
\begin{align}\label{sr}
\S(r,t)
:=\P(\tau>t\,|\,\|X(0)\|=r),
\end{align}
satisfies the backward Fokker-Planck (backward Kolmogorov) equation,
\begin{align}\label{sann}
\begin{split}
\tfrac{\partial}{\partial t}\S
&=D\big(\tfrac{d-1}{r}\tfrac{\partial}{\partial r}+\tfrac{\partial^{2}}{\partial r^{2}}\big)\S,\quad t>0,\,r\in(a,{{R}}),\\
\tfrac{\partial}{\partial r}\S
&=0,\quad r={{R}},\\
\S
&=1,\quad t=0,
\end{split}
\end{align}
with either an absorbing Dirichlet condition or a Robin condition at the target. We write these two cases in a single boundary condition,
\begin{align}
D\tfrac{\partial}{\partial r}\S
&=\kon \S,\quad r=a,\label{bcimp}
\end{align}
where $\kon\in(0,\infty)$ corresponds to a partially absorbing target and $\kon=\infty$ corresponds to a perfectly absorbing target and \eqref{bcimp} means $\S=0$ at $r=a$. It is convenient to define the characteristic lengthscale and diffusion time,
\begin{align*}
L
&:=R-a>0,\\
\td
&:=\frac{L^{2}}{D}>0,
\end{align*}
and the dimensionless target size and target reactivity,
\begin{align}
\sigma
&:=\frac{a}{R}\in[0,1),\nonumber\\
\kappa
&:=\frac{\kon L}{D}>0,\label{kappaconv}
\end{align}
where $\kappa=\infty$ corresponds to a perfectly absorbing target. We now analyze the fastest FPT in four cases, depending on the initial searcher distribution and whether the target is perfectly or partially absorbing.

\subsection{Case 1: $\|X(0)\|=R$ and $\kappa=\infty$}\label{section perfect}

First consider the case that $\|X(0)\|=R$ (a Dirac delta function initial condition) and a perfectly absorbing target ($\kappa=\infty$). In the small target case ($\eps:=\sigma\ll1$ in dimensions $d\in\{2,3\}$), the FPT is approximately exponentially distributed and its diverging mean satisfies
\begin{align}\label{mean00}
d(\td)^{-1}\E[\tau]
\sim\begin{cases}
-\ln\sigma & \text{if }d=2,\\
\sigma^{-1} & \text{if }d=3,
\end{cases}
\quad\text{as }\sigma\to0.
\end{align}
Hence, for any fixed $N\ge1$, the distribution of $T_{k,N}$ is given by Proposition~\ref{propgen} in the limit $\sigma\to0$. In particular, we have that
\begin{align}\label{dist1}
T_{N}
\approx_{\dist}{\textup{Exp}(\E[\tau]/N)},
\end{align}
and thus
\begin{align}\label{mean1}
\E[T_{N}]\approx\frac{\E[\tau]}{N}.
\end{align}

On the other hand, if we fix any value of $\sigma\in(0,1)$, and take $N\to\infty$, then the distribution of $T_{k,N}$ is completely determined by the short-time behavior of the survival probability $S(t)=\S(R,t)$. In the Appendix, we show that $S(t)$ has the following short-time behavior,
\begin{align}\label{short1}
1-S(t)
&\sim At^{p}e^{-C/t}\quad\text{as }t\to0+,
\end{align}
where
\begin{align*}
A
=\frac{2}{\sqrt{\pi}}\sqrt{\frac{1}{\td}}\sigma^{(d-1)/2},\quad
p
=\frac{1}{2},\quad
C
=\frac{\td}{4}>0.
\end{align*}
Hence, for any fixed $\sigma\in(0,1)$, the distribution of $T_{k,N}$ is given by Theorem~\ref{repeat} in the limit $N\to\infty$. In particular,
\begin{align}\label{dist2}
T_{N}
\approx_{\dist}\textup{Gumbel}\left(\frac{C}{\ln N}
+\frac{Cp\ln(\ln(N))}{(\ln N)^{2}}
-\frac{C\ln(AC^{p})}{(\ln N)^{2}}\,,\,\frac{C}{(\ln N)^{2}}\right),
\end{align}
and thus by Theorem~\ref{repeat2},
\begin{align}\label{mean2}
\E[T_{N}]
&=\frac{C}{\ln N}
\Big[1
+\frac{p\ln(\ln(N))}{\ln N}
-\frac{\ln(AC^{p})+\gamma}{\ln N}
+o\big(1/\ln N\big)\Big],
\end{align}
as $N\to\infty$, where $\gamma\approx0.5772$ is the Euler-Mascheroni constant.

\subsection{Case 2: $\|X(0)\|=R$ and $\kappa<\infty$}\label{section imperfect}

Suppose again that $\|X(0)\|=R$, but now suppose that the target is partially absorbing ($\kappa<\infty$). In the case that the target is small and/or has small reactivity ($\eps:=\kappa\sigma^{d-1}\ll1$), the FPT is approximately exponentially distributed and its diverging mean satisfies
\begin{align}\label{mean01}
d(\td)^{-1}\E[\tau]
\sim
\frac{1-\sigma^{d}}{(1-\sigma)^{2}}\frac{1}{\kappa\sigma^{d-1}}
\quad\text{as }\kappa\sigma^{d-1}\to0.
\end{align}
Hence, for any fixed $N\ge1$, the distribution of $T_{k,N}$ is given by Proposition~\ref{propgen} in the limit $\kappa\sigma^{d-1}\to0$. In particular, \eqref{dist1} and \eqref{mean1} hold.

On the other hand, if we fix any value of $\kappa\sigma^{d-1}>0$ and take $N\to\infty$, then the distribution of $T_{k,N}$ is completely determined by the short-time behavior of the survival probability $S(t)=\S(R,t)$. In the Appendix, we show that $S(t)$ has the short-time behavior in \eqref{short1}, 
where
\begin{align}\label{apc3}
A
=\frac{4}{\sqrt{\pi}}(\td)^{-3/2}\kappa\sigma^{(d-1)/2},\quad
p=\frac{3}{2},\quad
C=\frac{\td}{4}>0.
\end{align}
Hence, for any fixed $\kappa\sigma^{d-1}$, the distribution of $T_{k,N}$ is given by Theorem~\ref{repeat} in the limit $N\to\infty$. In particular, $T_{N}$ satisfies \eqref{dist2} and \eqref{mean2} with $A$, $p$, and $C$ in \eqref{apc3}.

\subsection{Case 3: $X(0)=_{\dist}\text{Uniform}(\Omega)$ and $\kappa=\infty$}\label{section uniform perfect}

Suppose the target is perfectly absorbing as in \eqref{section perfect} (i.e.\ $\kappa=\infty$), but now suppose that each searcher is initially uniformly distributed in the domain $\Omega$. Hence, the survival probability of a single FPT is
\begin{align}\label{integrate0}
S^{}(t)
:=\P(\tau>0\,|\,X(0)=_{\dist}\textup{Uniform}(\Omega))
=\int_{a}^{R}\S(r,t)\,\frac{d}{R^{d}-a^{d}}r^{d-1}\dd r.
\end{align}
Note that \eqref{integrate0} is the definition of the survival probability of a single searcher in the case that the searcher is initially uniformly distributed in the domain.

In the case of a small target ($\eps:=\sigma\ll1$ in dimensions $d\in\{2,3\}$), the FPT $\tau$ and the extreme $T_{k,N}$ are as in Section~\ref{section perfect} above. Similar to Section~\ref{section perfect}, if we fix any value of $\sigma\in(0,1)$, and take $N\to\infty$, then the distribution of $T_{k,N}$ is again completely determined by the short-time behavior of the survival probability $S^{}(t)$. However, in the case of a uniform initial distribution, the short time behavior of the survival probability is fundamentally different than in Section~\ref{section perfect}. In the Appendix, we show that $S^{}(t)$ has the following short-time behavior,
\begin{align}\label{short2}
1-S^{}(t)
&\sim At^{p}\quad\text{as }t\to0+,
\end{align}
where
\begin{align}\label{apc333}
A
=\frac{2d(1-\sigma)}{\sqrt{\pi}(1-\sigma^{d})}\frac{\sigma^{d-1}}{\sqrt{\td}},\quad
p=\frac{1}{2}.
\end{align}
Hence, for any fixed $\sigma\in(0,1)$, the distribution of $T_{k,N}$ is given by Theorem~\ref{main} in the limit $N\to\infty$. In particular,
\begin{align}\label{dist3}
(AN)^{2}T_{N}
\to_{\dist}\textup{Weibull}(1,1/2)\quad\text{as }N\to\infty,
\end{align}
and thus by Theorem~\ref{moments},
\begin{align}\label{mean3}
\E[T_{N}]
&\sim
\frac{2}{A^{2}N^{2}}
=\td\frac{\pi(1-\sigma^{d})^{2}}{2d^{2}(1-\sigma)^{2}\sigma^{2(d-1)}}\frac{1}{N^{2}}\quad\text{as }N\to\infty.
\end{align}
Notice that if $\sigma\ll1$, then \eqref{mean3} means that (using \eqref{mean00})
\begin{align*}
\E[T_{N}]/\td
\approx
\begin{cases}
\frac{\pi}{8}\exp\big(4\E[\tau]/\td\big)\frac{1}{N^{2}} & \text{if }d=2,\\
\frac{9\pi}{2}(\E[\tau]/\td)^{4}\frac{1}{N^{2}} & \text{if }d=3,
\end{cases}\quad\text{for $N$ sufficiently large}.
\end{align*}

\subsection{Case 4: $X(0)=_{\dist}\text{Uniform}(\Omega)$ and $\kappa<\infty$}\label{section uniform imperfect}

Finally, suppose the searchers are initially uniformly distributed in the domain $\Omega$ and the target is partially absorbing ($\kappa<\infty$). Hence, the survival probability of a single FPT is obtained by integrating the survival probability $\S(r,t)$ from Case 2 in Section~\ref{section imperfect} as in \eqref{integrate0}.

In the case of a small target and/or low reactivity ($\eps:=\kappa\sigma^{d-1}\ll1$), the FPT $\tau$ and the extreme $T_{k,N}$ are as in Section~\ref{section imperfect} above. Similar to Section~\ref{section perfect}, if we fix any value of $\kappa\sigma^{d-1}\in(0,1)$, and take $N\to\infty$, then the distribution of $T_{k,N}$ is again completely determined by the short-time behavior of the survival probability $S_{}^{}(t)$. In the Appendix, we show that $S_{}^{}(t)$ has the short-time behavior in \eqref{short2}, where
\begin{align}\label{ap4}
A
=\frac{d(1-\sigma)}{1-\sigma^{d}}\frac{\kappa\sigma^{d-1}}{\td},\quad
p=1.
\end{align}
Hence, for any fixed $\kappa\sigma^{d-1}$, the distribution of $T_{k,N}$ is given by Theorem~\ref{main} in the limit $N\to\infty$. In particular, since $p=1$ in \eqref{ap4}, $T_{N}$ is asymptotically exponentially distributed for large $N$,
\begin{align*}
ANT_{N}
\to_{\dist}\textup{Weibull}(1,1)
=_{\dist}\text{Exp}(1)\quad\text{as }N\to\infty.
\end{align*}
Further, Theorem~\ref{moments} implies that
\begin{align}\label{meanan}
\E[T_{N}]
\sim
\frac{1}{AN}
=\td\frac{1-\sigma^{d}}{d(1-\sigma)\kappa\sigma^{d-1}}\frac{1}{N}\quad\text{as }N\to\infty,
\end{align}
Upon using \eqref{mean01}, if $\sigma\ll1$, then \eqref{meanan} implies
\begin{align*}
\E[T_{N}]
\approx\frac{\E[\tau]}{N}\quad\text{for $N$ sufficiently large},
\end{align*}
and thus we conclude that $T_{N}$ is approximately exponential with mean $\E[\tau]/N$ for both large $N$ and small $N$.

\subsection{Comparison of 4 cases}\label{section compare 4}

If we apply the condition in \eqref{cond2} to the 4 cases above, we find that a sufficient condition for $T_{N}\approx_{\dist}{\textup{Exp}(\E[\tau]/N)}$ is
\begin{align}\label{theta1}
1
&\gg
\theta_{\text{exp}}:=
\begin{cases}
N\sigma^{1/(2N)} & \text{if }d=2,\,\kappa=\infty,\\
N\exp\big(-(3N\sigma)^{-1}\big)  & \text{if }d=3,\,\kappa=\infty,\\
N\exp\big(-(dN\kappa\sigma^{d-1})^{-1}\big) & \text{if }d\in\{1,2,3\},\,\kappa<\infty.
\end{cases}
\end{align}
Further, if $\|X(0)\|=R$, then we apply the condition in Remark~\ref{remarksuff2} to Cases 1 and 2 above to find that a sufficient condition for $T_{N}$ to be in the extreme Gumbel regime of Theorems~\ref{repeat}-\ref{repeat2} is
\begin{align}\label{theta2}
1
&\gg
\theta_{\text{gum}}:=
\begin{cases}
-\ln(\sigma^{(d-1)/2})/\ln N & \text{if }d\in\{2,3\},\,\kappa=\infty,\,\|X(0)\|=R,\\
-\ln(\kappa\sigma^{(d-1)/2})/\ln N & \text{if }d\in\{1,2,3\},\,\kappa<\infty,\,\|X(0)\|=R.
\end{cases}
\end{align}
Finally, if $X(0)=_{\dist}\text{Uniform}(\Omega)$, then we apply the condition in Remark~\ref{remarksuff1} to the Cases 3 and 4 above to find that a sufficient condition for $T_{N}$ to be in the extreme Weibull regime of Theorems~\ref{main}-\ref{moments} is
\begin{align}\label{theta3}
1
&\gg
\theta_{\text{wei}}:=
\begin{cases}
(dN\sigma^{d-1})^{-2} & \text{if }d\in\{1,2,3\},\,\kappa=\infty,\,X(0)=_{\dist}\text{Uniform},\\
(dN\kappa\sigma^{d-1})^{-1} & \text{if }d\in\{1,2,3\},\,\kappa<\infty,\,X(0)=_{\dist}\text{Uniform}.
\end{cases}
\end{align}

The conditions in \eqref{theta1}-\eqref{theta3} allow us to estimate the distribution of $T_{N}$ based on the values of $d$, $\kappa$, $\sigma$, and the initial searcher distribution. Indeed, if $\theta\in(0,1)$ is some small threshold parameter, then we can solve \eqref{theta1} for $N$ to find that $\theta_{\text{exp}}\le\theta$ if
\begin{align}\label{nexp}
N
\le N_{\text{exp}}(\theta)
:=
\begin{cases}
-\ln(\sigma)\big(2W_{0}(-\ln(\sigma)/(2\theta))\big)^{-1} & \text{if }d=2,\,\kappa=\infty,\\
\big(3\sigma W_{0}(1/(3\sigma\theta))\big)^{-1} & \text{if }d=3,\,\kappa=\infty,\\
\big(d\kappa\sigma^{d-1} W_{0}(1/(d\kappa\sigma^{d-1}\theta))\big)^{-1} & \text{if }d\in\{1,2,3\},\,\kappa<\infty,
\end{cases}
\end{align}
where $W_{0}(z)$ denotes the principal branch of the LambertW function \cite{corless1996} (defined as the inverse of $f(z) = ze^{z}$ and also called the product logarithm function).
In particular, if $N\le N_{\text{exp}}$, then $N$ is ``sufficiently small'' so that $T_{N}\approx_{\dist}{\textup{Exp}(\E[\tau]/N)}$.

Similarly, $\theta_{\text{gum}}\le\theta$ if
\begin{align}\label{ngum}
N\ge N_{\text{gum}}(\theta)
:=
\begin{cases}
\big(\sigma^{(d-1)/2}\big)^{-1/\theta} & \text{if }d\in\{1,2,3\},\,\kappa=\infty,\,\|X(0)\|=R,\\
\big(\kappa\sigma^{(d-1)/2}\big)^{-1/\theta} & \text{if }d\in\{1,2,3\},\,\kappa<\infty,\,\|X(0)\|=R.
\end{cases}
\end{align}
Finally, $\theta_{\text{wei}}\le\theta$ if
\begin{align}\label{nwei}
N\ge N_{\text{wei}}(\theta)
:=
\begin{cases}
\big(d\sigma^{d-1}\sqrt{\theta}\big)^{-1} & \text{if }d\in\{1,2,3\},\,\kappa=\infty,\,X(0)=_{\dist}\text{Uniform},\\
\big(d\kappa\sigma^{d-1}\theta\big)^{-1} & \text{if }d\in\{1,2,3\},\,\kappa<\infty,\,X(0)=_{\dist}\text{Uniform}.
\end{cases}
\end{align}
That is, \eqref{ngum} and \eqref{nwei} represent the $N$ ``sufficiently large'' values for which the extreme regimes of Section~\ref{fast} are valid. We note that the extreme Weibull regime for $\kappa<\infty$ is in fact exponential.

\begin{figure}[t]
\centering
\includegraphics[width=.495\linewidth]{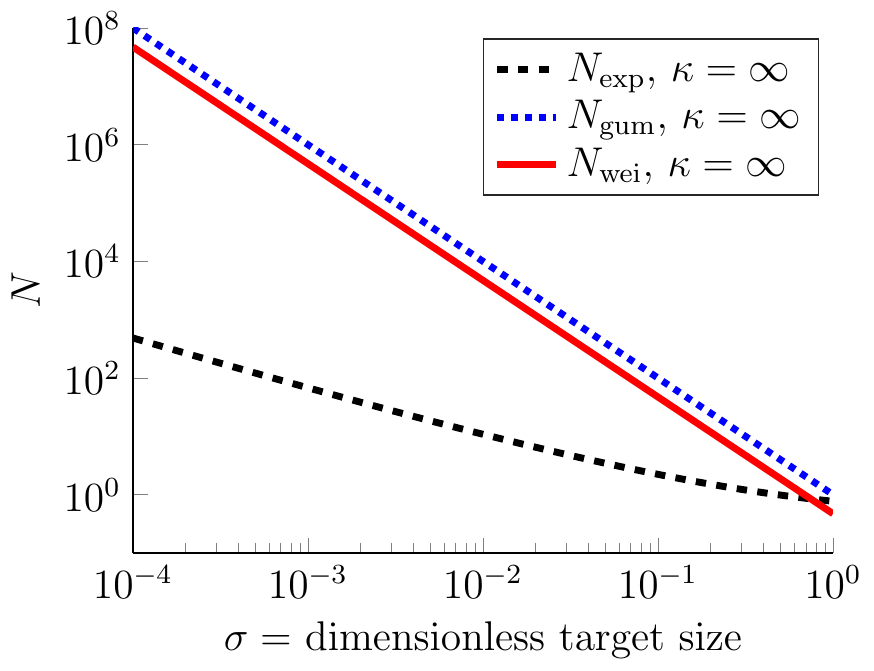}
\includegraphics[width=.495\linewidth]{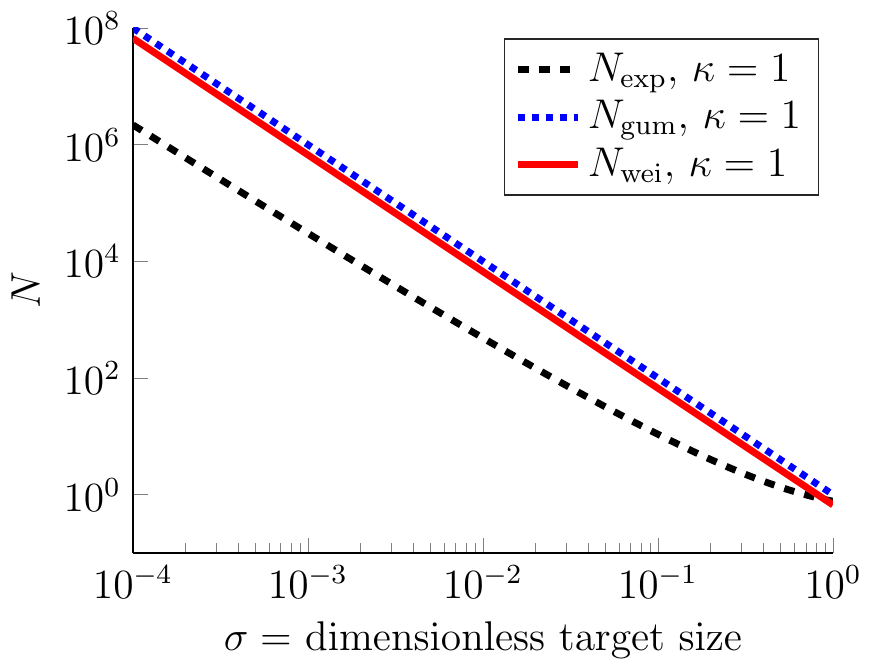}
\caption{
The curves in both panels are $N_{\text{exp}}(\theta)$, $N_{\text{gum}}(\theta)$, and $N_{\text{wei}}(\theta)$ in \eqref{nexp}-\eqref{nwei} as functions of $\sigma$ for $\theta=1/2$ and $d=3$. Hence, $T_{N}\approx_{\dist}\text{Exp}(\E[\tau]/N)$ in the region of $(\theta,N)$-parameter space below the black dashed curve. Similarly, $T_{N}$ is approximately Gumbel (respectively Weibull) in the region of $(\theta,N)$-parameter space above the blue dotted curve (respectively red solid curve). In the right panel, the Weibull distribution for large $N$ is actually an exponential distribution (see Section~\ref{section uniform imperfect}).
}
\label{figregime}
\end{figure}

In Figure~\ref{figregime}, we plot $N_{\text{exp}}$, $N_{\text{gum}}$, and $N_{\text{wei}}$ as functions of $\sigma\in(0,1)$ for $\theta=1/2$ and $d=3$ (the left panel is for $\kappa=\infty$ and the right panel is for $\kappa=1<\infty$). In these panels, the region of $(\sigma,N)$-parameter space below $N_{\text{exp}}$ is the regime in which $T_{N}$ is exponential, and the region above $N_{\text{gum}}$ (respectively $N_{\text{wei}}$) is the region in which $T_{N}$ is in the extreme Gumbel (respectively Weibull) regime if $\|X(0)\|=R$ (respectively $X(0)=_{\dist}\text{Uniform}$).

\subsection{Comparison to numerical simulations}\label{numerical}

\begin{figure}[t]
\centering
\includegraphics[width=.495\linewidth]{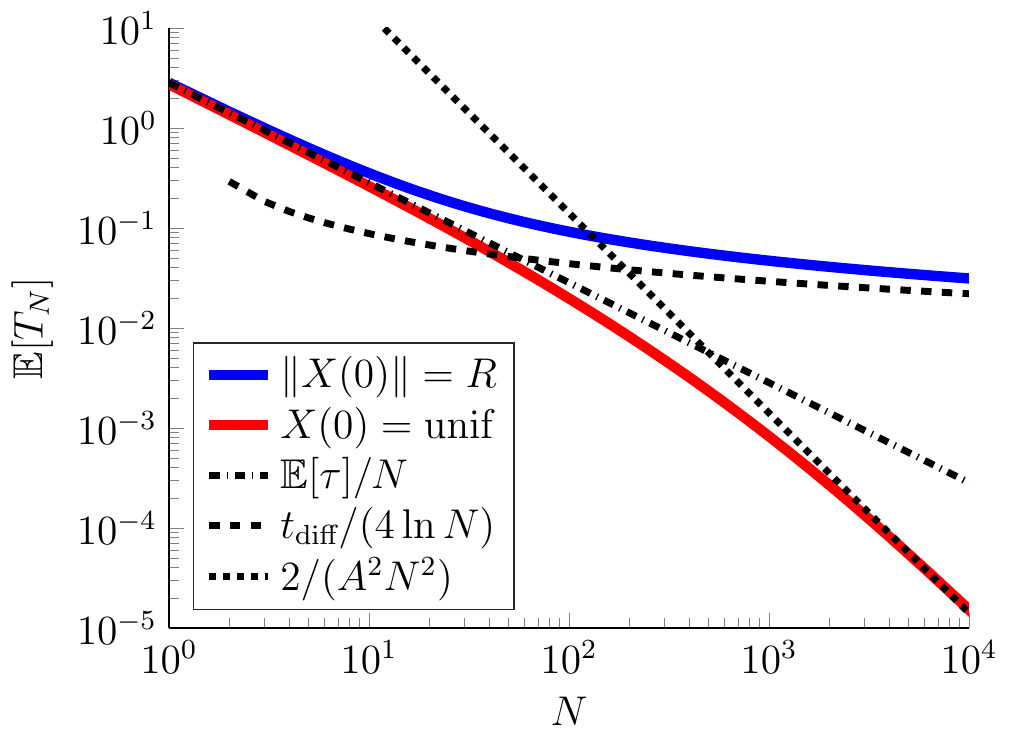}
\includegraphics[width=.495\linewidth]{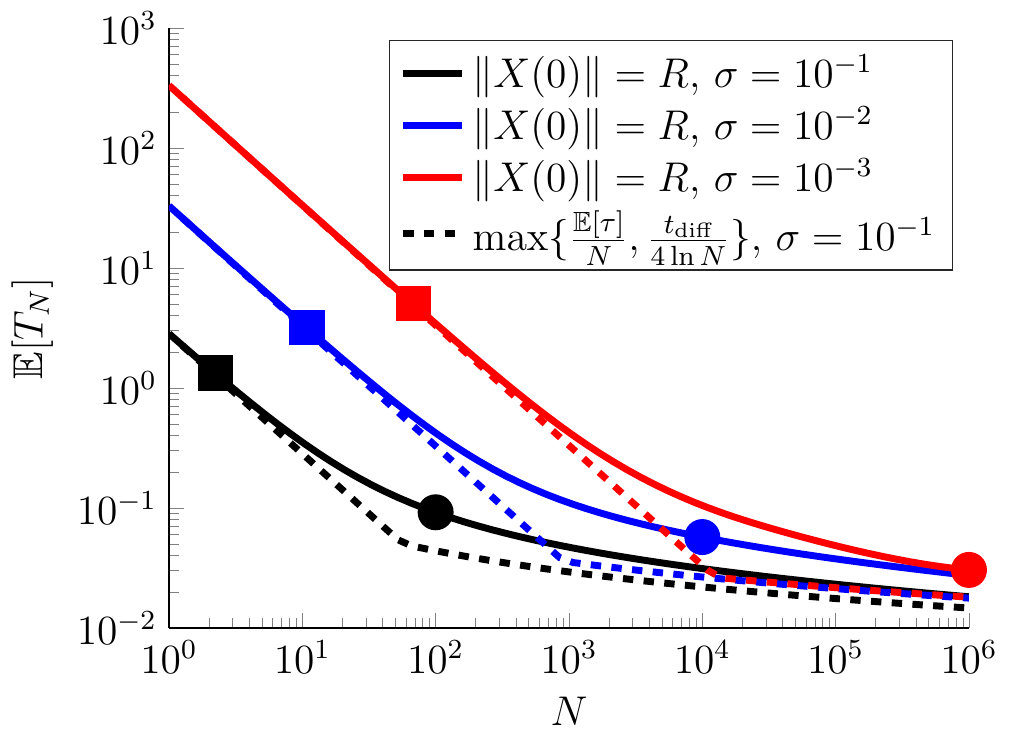}
\caption{Perfectly absorbing targets ($\kappa=\infty$). The left panel plots numerically computed values of $\E[T_{N}]$ for Cases 1 and 3 as solid blue and red curves, respectively. The black dashed, dotted, and dot-dashed curves are the theoretical asymptotic behaviors of Sections~\ref{section perfect} and \ref{section uniform perfect}. In this left panel, $\sigma=0.1$. In the right panel, the solid black, blue, and red curves are numerically computed values of $\E[T_{N}]$ for Case 1 and the dashed curves are the maximum of the corresponding theoretical behaviors for large and small $N$ (see \eqref{maxmax}). The squares are at $N=N_{\text{exp}}(\theta)$ (see \eqref{nexp}) which predict when $\E[T_{N}]$ transitions out of the exponential regime. Similarly, the circles are at $N=N_{\text{gum}}(\theta)$ (see \eqref{ngum}) which predict when $\E[T_{N}]$ transitions into the Gumbel regime. We take $\theta=1/2$ and $d=3$.}
\label{figzoo}
\end{figure}

In this section, we compare the analytical results of Sections~\ref{section perfect}-\ref{section compare 4} to numerical simulations. Numerically, we solve the partial differential equation \eqref{sann}-\eqref{bcimp} using the Matlab function \texttt{pdepe} \cite{matlab}. We then compute $\E[T_{N}]$ by numerical quadrature,
\begin{align*}
\E[T_{N}]=\int_{0}^{\infty}(S(t))^{N}\,\dd t,
\end{align*}
for both the Dirac delta function initial conditions of Sections~\ref{section perfect}-\ref{section imperfect} and the uniform initial conditions of Sections~\ref{section uniform perfect}-\ref{section uniform imperfect}.

Figure~\ref{figzoo} corresponds to the case of a perfectly absorbing target ($\kappa=\infty$) in Sections~\ref{section perfect} and \ref{section uniform perfect} (Cases 1 and 3). In the left panel of Figure~\ref{figzoo}, the solid curves are $\E[T_{N}]$ as a function of $N$ for  $\|X(0)\|=R$ (blue curve) and $X(0)=_{\dist}\text{Uniform}$ (red curve) and the black dashed, dotted, and dot-dashed curves are the theoretical asymptotic behaviors of Sections~\ref{section perfect} and \ref{section uniform perfect}. In agreement with the analysis, these numerical results show that (i) $\E[T_{N}]\approx\E[\tau]/N$ for small $N$ regardless of initial conditions, (ii) $\E[T_{N}]\sim\td/(4\ln N)$ as $N\to\infty$ if $\|X(0)\|=R$, and (iii) $\E[T_{N}]\sim2/(A^{2}N^{2})$ as $N\to\infty$ if $X(0)=_{\dist}\text{Uniform}$, where $A$ is in \eqref{apc333}.

The right panel of Figure~\ref{figzoo} plots $\E[T_{N}]$ as a function of $N$ for Case 1 ($\kappa=\infty$ and $\|X(0)\|=R$) for different values of $\sigma$ (the dimensionless target size). The squares are at $N=N_{\text{exp}}(\theta)$ (see \eqref{nexp}) and the circles are at $N=N_{\text{gum}}(\theta)$ (see \eqref{ngum}), both for $\theta=1/2$. In particular, these are the theoretical predictions for where $T_{N}$ transitions out of the exponential regime (squares) and where $T_{N}$ transitions into the Gumbel regime (circles), and these agree well with the numerical results. Interestingly, these figure shows that simply taking the maximum of the small $N$ and large $N$ behaviors is a good approximation for the mean fastest FPT,
\begin{align}\label{maxmax}
\E[T_{N}]
\approx\max\Big\{\frac{\E[\tau]}{N},\frac{\td}{4\ln N}\Big\}\quad\text{for all }N\ge1.
\end{align}

Figure~\ref{figkappa} corresponds to the case of a partially absorbing target ($\kappa<\infty$) in Sections \ref{section imperfect} and \ref{section uniform imperfect} (Cases 2 and 4). Here, the blue curves are for a small value of $\kappa$ (namely $\kappa=10^{-2}$) and the black curves are for a large value of $\kappa$ (namely $\kappa=10^{2}$). The red markers are the theoretical asymptotic behaviors of Sections~\ref{section imperfect} and \ref{section uniform imperfect}, which agree with the numerical results in the blue and black curves.

\begin{figure}[t]
\centering
\includegraphics[width=.75\linewidth]{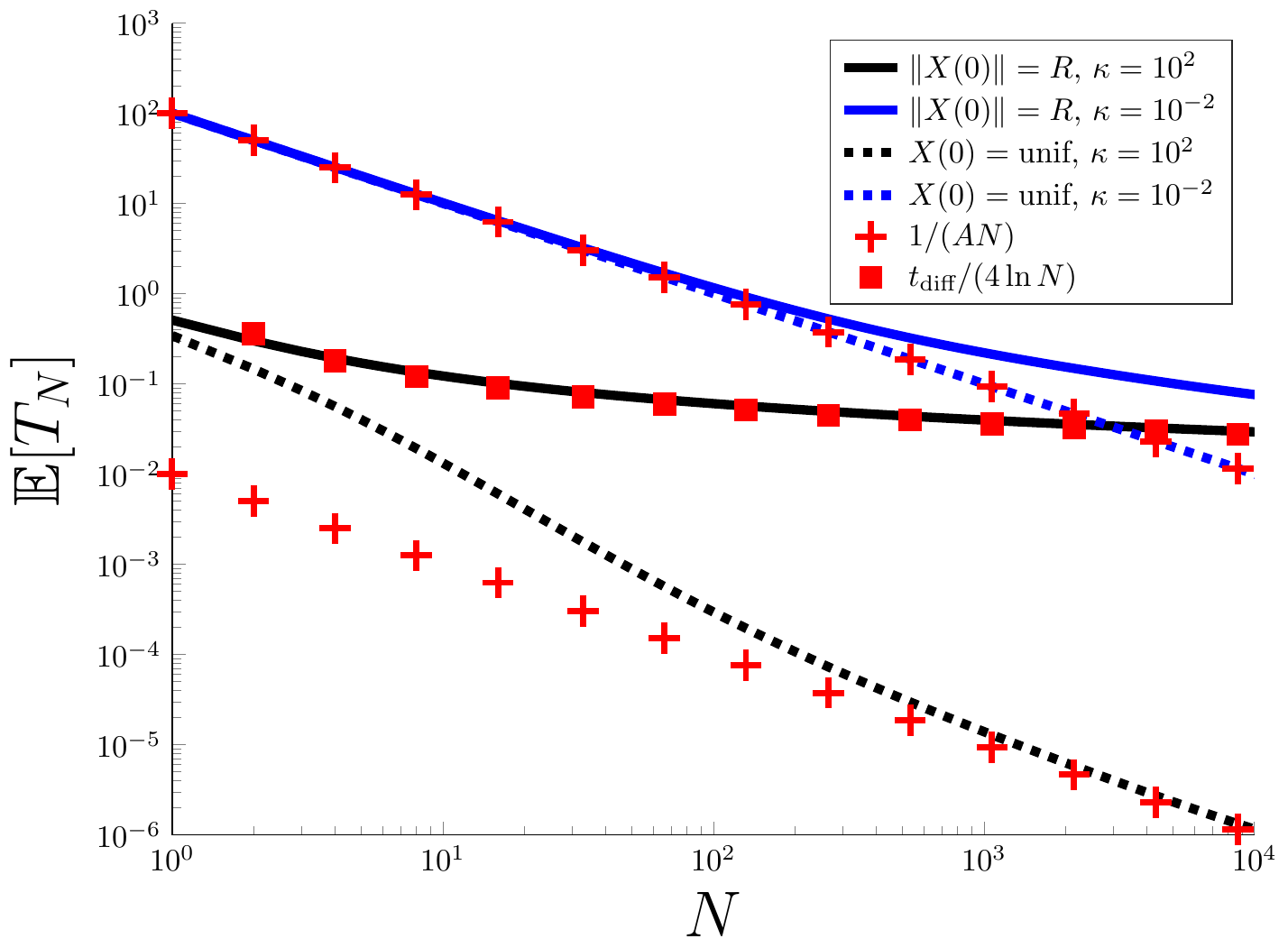}
\caption{Partially absorbing targets ($\kappa<\infty$). The blue curves are for a small value of $\kappa$ (namely $\kappa=10^{-2}$) and the black curves are for a large value of $\kappa$ (namely $\kappa=10^{2}$). The red markers are the theoretical asymptotic behaviors of Sections~\ref{section imperfect} and \ref{section uniform imperfect} (the two lines of red plusses are for the different values of $A$ corresponding to either $\kappa=10^{-2}$ or $\kappa=10^{2}$). We take $\sigma=0.1$ and $d=1$. 
}
\label{figkappa}
\end{figure}

\section{Discussion}

Much of the existing theory of FPTs of single diffusive searchers has been developed in the case that the FPT $\tau $ is much slower than the characteristic diffusion timescale $\td$. Mathematically, one typically introduces a small parameter $\eps>0$ (which, for example, measures the size of the target in the narrow escape problem \cite{holcman2014} or the strength of the noise for escape from a potential well \cite{hanggi1990}), and studies how the FPT $\tau=\tau(\eps)$ diverges as $\eps\to0$. In the case of $N$ searchers which reach the target at independent and identically distributed times $\{\tau_{1},\dots,\tau_{N}\}$, the fastest FPT,
\begin{align*}
T_{N}
=T_{N}(\eps):=\min\{\tau_{1}(\eps),\dots,\tau_{N}(\eps)\},
\end{align*}
also diverges as $\eps\to0$,
\begin{align}\label{slowd}
T_{N}(\eps)/\td\to\infty\quad\text{as }\eps\to0.
\end{align}
On the other hand, the fastest FPT vanishes in the many searcher limit,
\begin{align}\label{fastd}
T_{N}(\eps)/\td\to0\quad\text{as }N\to\infty.
\end{align}
Equations~\eqref{slowd} and \eqref{fastd} hold with probability one.

In this paper, we investigated the competition between the slow regime in \eqref{slowd} and the fast regime in \eqref{fastd}. We derived a simple sufficient condition (see \eqref{cond2}) and a simple necessary condition (see \eqref{condn}) for $T_{N}$ to be in the slow regime in \eqref{slowd}, based on the MFPT of a single searcher (the necessary condition also assumes that the initial searcher distribution satisfies \eqref{away}). These conditions quantify how $T_{N}(\eps)$ is in the slow regime in \eqref{slowd} for ``$N$ sufficiently small.'' If this sufficient condition is satisfied, then we gave an approximation for the full distribution and moments of $T_{N}$, and more generally of the $k$th fastest FPT, $T_{k,N}$.

We also gave sufficient conditions for the fast regime in \eqref{fastd} (see Remarks~\ref{remarksuff1} and \ref{remarksuff2}) and the limiting distribution and asymptotic moments of $T_{N}$ in the $N\to\infty$ limit. This analysis revealed the critical effect that initial conditions and target reactivity can have on the large $N$ distribution of $T_{N}$. Indeed, $T_{N}$ may be asymptotically Weibull, Gumbel, or exponential, and $\E[T_{N}]$ may decay like the reciprocal of $N^{2}$, $N$, and $\ln N$ as $N\to\infty$, depending on initial conditions and target reactivity. These various parameter regimes are summarized in Table~\ref{tablesummary}.

\begin{table}
\begin{center}
\footnotesize
\begin{tabular}{
| m{.2\textwidth}
| m{.28\textwidth}
| m{.37\textwidth} | } 
\hline
Parameter regime
&
$T_{N}$ mean and distribution & Comments\\ 
\hline\hline
\begin{align*}
N\exp\Big(\frac{-\E[\tau]}{N\td}\Big)\ll1
\end{align*}
&
\begin{align*}
\E[T_{N}]
\approx\E[\tau]/N,\\
T_{N}
\approx_{\dist}{\textup{Exp}(\E[\tau]/N)}
\end{align*}
&
The given parameter regime is a sufficient condition for the given exponential distribution of $T_{N}$.\\ 
\hline
\begin{align*}
4\ln N\frac{\E[\tau]}{N\td}\not\ll1
\end{align*}
&
\begin{align*}
\E[T_{N}]
\approx\E[\tau]/N,\\
T_{N}
\approx_{\dist}{\textup{Exp}(\E[\tau]/N)}
\end{align*}
&
The given parameter regime is a necessary condition for the given exponential distribution of $T_{N}$ if 
$U_{0}\cap \overline{U_{\text{T}}}=\varnothing$ (see \eqref{away}).
\\ 
\hline
\begin{align*}
\frac{1}{(AN)^{1/p}}\ll1
\end{align*}
&
\begin{align*}
\E[T_{N}]
\approx \frac{\Gamma(1+1/p)}{(AN)^{1/p}},\\
T_{N}\approx_{\dist}\textup{Weibull}\Big(\frac{1}{(AN)^{1/p}},p\Big)
\end{align*}
&
The given parameter regime is a sufficient condition for the given Weibull distribution of $T_{N}$ if
\begin{align*}
\P(\tau\le t)\sim At^{p}\quad\text{as }t\to0+,
\end{align*}
for some $A>0$, $p>0$, which is typical if $U_{0}\cap \overline{U_{\text{T}}}\neq\varnothing$. \\ 
\hline
\begin{align*}
\Big|\frac{\ln(AC^{p})+\gamma}{\ln N}\Big|\ll1
\end{align*}
&
\begin{align*}
\E[T_{N}]
\approx b_{N}-\gamma a_{N}\approx\frac{C}{\ln N},\\
T_{N}
\approx_{\dist}\textup{Gumbel}(b_{N},a_{N})
\end{align*}
&
The given parameter regime is a sufficient condition for the given Gumbel distribution of $T_{N}$ if
\begin{align*}
\P(\tau\le t)\sim At^{p}e^{-C/t}\quad\text{as }t\to0+,
\end{align*}
for some $C>0$, $A>0$, $p\in\R$, which is typical if $U_{0}\cap \overline{U_{\text{T}}}=\varnothing$. See Theorem~\ref{repeat} for $a_{N},b_{N}$.\\ 
\hline
\end{tabular}
\normalsize
\end{center}
\caption{
Summary of slow and fast parameter regimes. The first two rows correspond to the slow regime studied in section~\ref{slow} (small $N$). The last two rows correspond to the fast regime studied in section~\ref{fast} (large $N$). In the bottom row, $\gamma\approx0.5772$ is the Euler-Mascheroni constant.
}
\label{tablesummary}
\end{table}

Many authors have investigated extreme FPTs of diffusive searchers \cite{weiss1983, yuste1996, yuste2000, yuste2001, van2003, redner2014, meerson2015, basnayake2019, lawley2020esp, lawley2020uni, lawley2020dist} (see also \cite{godec2016x, grebenkov2018strong, hartich2018, hartich2019, hartich2019reaction} for interesting related work). Most of these prior works assume that the searchers all start at some fixed location, and they study the large $N$ asymptotics of $T_{N}$. For example, it is known that if each searcher starts at some fixed point $x$, then \cite{lawley2020uni} 
\begin{align}\label{old}
\E[T_{N}]
\sim\frac{L^{2}(x,\partial\Omega_{\text{T}})}{4D\ln N}\quad\text{as }N\to\infty,
\end{align}
where $L(x,\partial\Omega_{\text{T}})>0$ is a certain geodesic distance from $x$ to the target. A notable exception is in the first study of extreme FPTs of diffusion \cite{weiss1983}, in which Weiss, Shuler, and Lindenberg found (among many other things) that for diffusive searchers in one space dimension that start uniformly distributed with a perfectly absorbing target, $\E[T_{N}]$ decays like $1/N^{2}$ as $N\to\infty$ (see their equation~(3.15), which agrees with our equation~\eqref{mean3} with $d=1$ upon noting that their problem has targets at both ends of the interval). It is important to note that if the initial searcher distribution is given by some measure $\mu_{0}$, then the asymptotics of $\E[T_{N}]$ are not found merely by integrating \eqref{old} over $x$. That is, if $\mu_{0}$ is not a Dirac delta function, then 
\begin{align}\label{notintegrate}
\E[T_{N}]
\not\sim\int_{\Omega}\frac{L^{2}(x,\partial\Omega_{\text{T}})}{4D\ln N}\,\dd \mu_{0}(x)\quad\text{as }N\to\infty,
\end{align}
since, for example, $\E[T_{N}]$ can decay as $1/N^{2}$ or $1/N$ as we have shown. 

We close by discussing our results in the context of the recently formulated ``redundancy principle'' for biological systems \cite{schuss2019}. As described in the Introduction, the redundancy principle claims that the many seemingly redundant copies of an object (cells, proteins, molecules, etc.)\ are not a waste, but rather have the specific function of accelerating activation rates \cite{schuss2019}. That is, a biological system can overcome the prohibitively slow FPTs associated with diffusive search by deploying many searchers (i.e.\ it can move from the slow regime in \eqref{slowd} to the fast regime in \eqref{fastd}).

This principle was formulated in the context of the $1/\ln N$ decay of $\E[T_{N}]$ as $N\to\infty$, which is valid for searchers that cannot start arbitrarily close to the target (see \eqref{away}). However, in this case we have shown that $\E[T_{N}]$ initially decays as $1/N$ (specifically, $\E[T_{N}]\approx\E[\tau]/N$), and does not transition to the $1/\ln N$ regime (specifically, $\E[T_{N}]\approx\td/(4\ln N)$) until very large values of $N$ if $\eps\ll1$.

Therefore, adding additional searchers to a system that is in the $1/N$ regime accelerates the FPT $T_{N}$ to a much greater degree compared to adding additional searchers to a system that is already in the $1/\ln N$ regime. That is, the marginal benefit of additional searchers decreases sharply as the system goes from the $1/N$ regime to the $1/\ln N$ regime. Therefore, from a cost/benefit perspective (in which a system balances the cost of additional searchers with the benefit of faster activation \cite{rusakov2019, coombs2019}), our results predict that one should find more systems in the $1/N$ regime rather than deep into the $1/\ln N$ regime.

Finally, it is interesting to note the contrasting situation that occurs if the searchers are initially uniformly distributed (which is often assumed in studies of the narrow escape problem \cite{grebenkov2017}). In this case, $\E[T_{N}]$ transitions from $1/N$ decay to the faster $1/N^{2}$ decay as $N$ grows (for perfectly absorbing targets). Hence, the marginal benefit of additional searchers increases as a system moves from the $1/N$ regime to the $1/N^{2}$ regime.

\section{Appendix}

\subsection{Proofs}\label{proofs}

\begin{proof}[Proof of Proposition~\ref{propgen}]
For an $N$-dimensional vector $\x=(x_{1},\dots,x_{N})\in\R^{N}$, define the function
\begin{align*}
g(\x)
:=(x_{(1)},x_{(2)},\dots,x_{(n-1)},x_{(N)}),
\end{align*}
where $x_{(1)}\le x_{(2)}\le\dots\le x_{(N)}$. In words, $g$ sorts the elements in a vector from smallest to largest values. By the continuous mapping theorem (see, for example, Theorem~2.7 in \cite{billingsley2013}) and the definition of $T_{k,N}$, we have that 
\begin{align*}
&\lambda(T_{1,N},T_{2,N},\dots,T_{N-1,N},T_{N,N})\\
&\quad=g(\lambda(\tau_{1},\dots,\tau_{N}))
\to_{\dist}g(Y_{1},\dots,Y_{N})\quad\text{as }\eps\to0,
\end{align*}
where $\{Y_{n}\}_{n=1}^{N}$ are iid with $Y_{n}=_{\dist}\textup{Exp}(1)$. It is a classical result in order statistics \cite{renyi1953} that
\begin{align*}
g(Y_{1},\dots,Y_{N})
=_{\dist}\bigg(\frac{X_{1}}{N},
\frac{X_{1}}{N}+\frac{X_{2}}{N-1}
,\dots,
\sum_{j=1}^{N}\frac{X_{j}}{N-j+1}\bigg),
\end{align*}
where $\{X_{n}\}_{n=1}^{N}$ are iid with $X_{n}=_{\dist}\textup{Exp}(1)$, which completes the proof.
\end{proof}

\begin{proof}[Proof of Theorem~\ref{main}]
The proof of this theorem is similar to the proof of Theorem 2 in \cite{lawley2020pdmp}. Define $X_{n}=-\tau_{n}$ for $n\ge1$. Therefore, \eqref{short} implies that if $-1\ll x<0$, then
\begin{align}\label{F}
\begin{split}
F(x)
&:=\P(X_{n}\le x)
=\P(\tau_{n}\ge-x)
=1-{{A}}(-x)^{p}+o((-x)^{p})\quad\text{as }x\to0-.
\end{split}
\end{align}
Hence, if $y>0$, then \eqref{F} implies
\begin{align*}
\lim_{t\to0+}\frac{1-F(-ty)}{1-F(-t)}
=y^{p}.
\end{align*}
Therefore, Theorem 1.2.1 and Corollary 1.2.4 in \cite{haanbook} imply that
\begin{align*}
M_{N}:=\max\{X_{1},\dots,X_{N}\}
=-\min\{-X_{1},\dots,-X_{N}\}
=-T_{N}
\end{align*}
satisfies
\begin{align}\label{pcd}
\frac{-M_{N}}{a_{N}}
=\frac{T_{N}}{a_{N}}
\to_{\dist}\textup{Weibull}(1,p)\quad\text{as }N\to\infty,
\end{align}
where $a_{N}:=({{A}} N)^{-1/p}>0$.
\end{proof}

\begin{proof}[Proof of Theorem~\ref{moments}]
The result follows from Theorem~4 in \cite{lawley2020pdmp}.
\end{proof}

\begin{proof}[Proof of Theorem~\ref{kth}]
The result follows from Theorem~\ref{main} above and Theorem 3.5 in \cite{colesbook}.
\end{proof}

\begin{proof}[Proof of Theorem~\ref{kth moment}]
The result follows from Theorem~7 in \cite{lawley2020pdmp}.
\end{proof}

\subsection{Annular domains}

In this section, we determine the short-time behavior of the survival probabilities studied in Sections~\ref{section perfect}-\ref{section uniform imperfect}. The method is to solve for the Laplace transformed survival probability exactly and then determine the short-time behavior from the asymptotic behavior of the Laplace transform. This method has been employed in, for example, Reference~\cite{grebenkov2019full}.

\subsubsection{Perfect absorption, $\kappa=\infty$}

We first consider the case of a perfectly absorbing target. By taking the Laplace transform of \eqref{sr},
\begin{align*}
\SS(r,s)
:=\int_{0}^{\infty}e^{-st}\S(r,t)\,\dd t,
\end{align*}
and nondimensionalizing time $t\to\frac{D}{R^{2}}t$ and space $r\to\frac{1}{R}r$, we obtain that \eqref{sann}-\eqref{bcimp} becomes the dimensionless problem,
\begin{align}
s\SS-1
&=\big(\tfrac{d-1}{r}\tfrac{\dd}{\dd r}+\tfrac{\dd^{2}}{\dd r^{2}}\big)\SS,\quad s>0,\,r\in(\sigma,1),\label{lt}\\
\tfrac{\partial}{\partial r}\SS
&=0,\quad r=1,\label{bc1}\\
\SS
&=0,\quad r=\sigma,\label{bc2}
\end{align}
where $\sigma:=a/R\in(0,1)$ is the dimensionless target radius.

The general solution to \eqref{lt} is
\begin{align}\label{gensoln}
{\SS}(r,s)
=\frac{1}{s}+C_1I_{0,d}(\sqrt{s}r)+C_2K_{0,d}(\sqrt{s}r),
\end{align}
where $I_{\alpha,d}(x)$ and $K_{\alpha,d}(x)$ are the $d$-dimensional modified Bessel functions of order $\alpha$ when $d=2$, and
\begin{align*}
I_{\alpha,1}(x)
&:=e^{x},\quad
K_{\alpha,1}(x)
:=e^{-x}\\
I_{0,3}(x)
&:=\frac{\sinh(x)}{x},\quad
I_{1,3}(x)
:=\frac{x\cosh(x)-\sinh(x)}{x^{2}},\\
K_{0,3}(x)
&:=\frac{e^{-x}}{x},\quad
K_{1,3}(x)
:=\frac{e^{-x}(x+1)}{x^{2}}.
\end{align*}
Applying the boundary conditions in \eqref{bc1}-\eqref{bc2}, the solution \eqref{gensoln} becomes
\begin{align}\label{soln0}
{\SS}(r,s)
=\frac{1}{s}\Big[1-\frac{g(r,s)}{g(s,\eps)}\Big].
\end{align}
To find the behavior of \eqref{soln0} as $s\to\infty$, note that
\begin{align}\label{recall}
\begin{split}
I_{\alpha,d}(x)
&\sim\widetilde{I}_d\Big(\frac{1}{x}\Big)^{\frac{d-1}{2}}e^x\quad\text{as }x\to\infty, \\
K_{\alpha,d}(x)
&\sim\widetilde{K}_d\Big(\frac{1}{x}\Big)^{\frac{d-1}{2}}e^{-x}\quad\text{as }x\to\infty,
\end{split}
\end{align}
where $I_{d}'$ and $K_{d}'$ are constants determined by $d$. Hence, 
\eqref{soln0} has the large $s$ expansion,
\begin{align*}
    {\SS}(r,s)=\frac{1}{s}\Big[1-\Big(\frac{\eps}{r}\Big)^{\frac{d-1}{2}}e^{\sqrt{s}(\eps-r)}+o\big(e^{\sqrt{s}(\eps-r)}\big)\Big]
    \quad\text{as }s\to\infty.
\end{align*}
If the searcher is initially uniformly distributed in the domain $\Omega$, then we integrate over $r\in(\sigma,1)$ and obtain
\begin{align*}
\SS^{\unif}(r)
&=\int_{\sigma}^{1}\SS(r,s)\,\frac{r^{d-1}}{1-\sigma^{d}}\dd r\\
&=\frac{1}{s}\Big[1-\frac{d\sigma^{d-1}}{1-\sigma^{d}}\frac{1}{\sqrt{s}}
+o\Big(\frac{1}{\sqrt{s}}\Big)\Big]\quad\text{as }s\to\infty.
\end{align*}
Taking the inverse Laplace transform, we obtain the short time behavior,
\begin{align*}
1-S(r,t)
&\sim\frac{2}{\sqrt{\pi}(r-\sigma)}\Big(\frac{\sigma}{r}\Big)^{(d-1)/2}t^{1/2}e^{-(r-\sigma)^{2}/(4t)}\quad\text{as }t\to0+,\\
1-S^{\unif}(t)
&\sim\frac{2d\sigma^{d-1}}{\sqrt{\pi}(1-\sigma^{d})}t^{1/2}\quad\text{as }t\to0+.
\end{align*}

\subsubsection{Partially absorbing target, $\kappa<\infty$}

For the case of a partially absorbing target, we have that the Laplace transform,
\begin{align*}
\SS_{\imp}(r,s):=\int_{0}^{\infty}e^{-st}\S(r,t)\,\dd t,
\end{align*}
satisfies \eqref{lt}-\eqref{bc1}, and \eqref{bc2} is replaced by
\begin{align}\label{bc3}
\tfrac{\partial}{\partial r}\SS_{\imp}=\overline{\kappa} \SS_{\imp},\quad r=\sigma,
\end{align}
where $\overline{\kappa}:=\frac{\kon R}{D}$ is the dimensionless reactivity (which is a slightly differently nondimensionalization than \eqref{kappaconv}).

Applying the boundary conditions in \eqref{bc1} and \eqref{bc3} to the general solution in \eqref{gensoln} yields
\begin{align*}
\SS_{\imp}(r,s)
=\frac{1}{s}\Big[1+\frac{\overline{\kappa} g(r,s)}{{\tfrac{\partial g}{\partial r}}(s,\eps)-\overline{\kappa} g(s,\eps)}\Big],
\end{align*}
where
\begin{align*}
g(r,s)
&:=K_{1,d}(\sqrt{s})I_{0,d} (\sqrt{s}r )+I_{1,d} (\sqrt{s} )K_{0,d} (\sqrt{s}r ),\\
{\tfrac{\partial g}{\partial r}}(r,s)
&=\sqrt{s}\big[K_{1,d} (\sqrt{s} )I_{1,d} (\sqrt{s}r )-I_{1,d} (\sqrt{s} )K_{1,d} (\sqrt{s}r )\big].
\end{align*}
Using \eqref{recall}, we thus obtain the large $s$ expansion,
\begin{align*}
\SS_{\imp}(r,s)
=\frac{1}{s}\Big[1-\frac{\overline{\kappa}}{\sqrt{s}}\Big(\frac{\eps}{r}\Big)^{\frac{d-1}{2}}e^{\sqrt{s}(\eps-r)}+o\big(\frac{1}{\sqrt{s}}e^{\sqrt{s}(\eps-r)}\big)\Big]\quad\text{as }s\to\infty.
\end{align*}
If the searcher is initially uniformly distributed in the domain $\Omega$, then we integrate over $r\in(\sigma,1)$ and obtain
\begin{align*}
\SS_{\imp}^{\unif}(r)
&=\int_{\sigma}^{1}\SS_{\imp}(r,s)\,\frac{r^{d-1}}{1-\sigma^{d}}\dd r\\
&=\frac{1}{s}\Big[1-\frac{d\sigma^{d-1}}{1-\sigma^{d}}\frac{1}{\sqrt{s}}
+o\Big(\frac{1}{\sqrt{s}}\Big)\Big]\quad\text{as }s\to\infty.
\end{align*}
Taking the inverse Laplace transform, we obtain the short time behavior,
\begin{align*}
1-S_{\imp}(r,t)
&\sim\frac{4\overline{\kappa}}{\sqrt{\pi}(r-\sigma)^{2}}\Big(\frac{\sigma}{r}\Big)^{(d-1)/2}t^{3/2}e^{-(r-\sigma)^{2}/(4t)}\quad\text{as }t\to0+,\\
1-S_{\imp}^{\unif}(t)
&\sim\frac{d\overline{\kappa}\sigma^{d-1}}{1-\sigma^{d}}t\quad\text{as }t\to0+.
\end{align*}

\bibliography{library.bib}

\begin{thebibliography}{10}

\bibitem{redner2001}
Sidney Redner.
\newblock {\em A guide to first-passage processes}.
\newblock Cambridge University Press, 2001.

\bibitem{chou_first_2014}
Tom Chou and Maria~R. D'Orsogna.
\newblock First passage problems in biology.
\newblock In {\em First-{Passage} {Phenomena} and {Their} {Applications}},
  pages 306--345. World Scientific, 2014.

\bibitem{holcman2014}
D~Holcman and Z~Schuss.
\newblock The narrow escape problem.
\newblock {\em {SIAM} Rev}, 56(2):213--257, 2014.

\bibitem{helmholtz1860}
H~Helmholtz.
\newblock Theorie der luftschwingungen in r{\"o}hren mit offenen enden.
\newblock {\em Journal f{\"u}r die reine und angewandte Mathematik}, 57:1--72,
  1860.

\bibitem{rayleigh1945}
J~W~S Rayleigh.
\newblock {\em The theory of sound}.
\newblock Dover, 1945.

\bibitem{holcman2014time}
D~Holcman and Z~Schuss.
\newblock Time scale of diffusion in molecular and cellular biology.
\newblock {\em J Phys A}, 47(17):173001, 2014.

\bibitem{benichou2008}
O~B{\'e}nichou and R~Voituriez.
\newblock Narrow-escape time problem: Time needed for a particle to exit a
  confining domain through a small window.
\newblock {\em Phys Rev Lett}, 100(16):168105, 2008.

\bibitem{bressloff13}
P.~C. Bressloff and J.~M. Newby.
\newblock Stochastic models of intracellular transport.
\newblock {\em Rev Mod Phys}, 85(1):135--196, 2013.

\bibitem{grebenkov2017}
D~S Grebenkov and G~Oshanin.
\newblock Diffusive escape through a narrow opening: new insights into a
  classic problem.
\newblock {\em Phys Chem Chem Phys}, 19(4):2723--2739, 2017.

\bibitem{grebenkov2006}
D~S Grebenkov.
\newblock Partially reflected brownian motion: a stochastic approach to
  transport phenomena.
\newblock {\em Focus on probability theory}, pages 135--169, 2006.

\bibitem{Berezhkovskii2004}
A~Berezhkovskii, Y~Makhnovskii, M~Monine, V~Zitserman, and S~Shvartsman.
\newblock Boundary homogenization for trapping by patchy surfaces.
\newblock {\em J Chem Phys}, 121(22):11390--11394, 2004.

\bibitem{Muratov2008}
C~Muratov and S~Shvartsman.
\newblock Boundary homogenization for periodic arrays of absorbers.
\newblock {\em Multiscale Model Simul}, 7(1):44--61, 2008.

\bibitem{cheviakov2012}
A~F Cheviakov, A~S Reimer, and M~J Ward.
\newblock Mathematical modeling and numerical computation of narrow escape
  problems.
\newblock {\em Phys Rev E}, 85(2):021131, 2012.

\bibitem{Dagdug2016}
L~Dagdug, M~V{\'a}zquez, A~Berezhkovskii, and V~Zitserman.
\newblock Boundary homogenization for a sphere with an absorbing cap of
  arbitrary size.
\newblock {\em J Chem Phys}, 145(21):214101, 2016.

\bibitem{bernoff2018b}
A.~Bernoff, A.~Lindsay, and D.~Schmidt.
\newblock Boundary homogenization and capture time distributions of
  semipermeable membranes with periodic patterns of reactive sites.
\newblock {\em Multiscale Model Simul}, 16(3):1411--1447, 2018.

\bibitem{lawley2019fpk}
S~D Lawley.
\newblock Boundary homogenization for trapping patchy particles.
\newblock {\em Phys Rev E}, 100(3):032601, 2019.

\bibitem{berg1977}
Howard~C Berg and Edward~M Purcell.
\newblock Physics of chemoreception.
\newblock {\em Biophys J}, 20(2):193--219, 1977.

\bibitem{keil1999}
F~J Keil.
\newblock Diffusion and reaction in porous networks.
\newblock {\em Catal Today}, 53(2):245--258, 1999.

\bibitem{scharifker1988}
B~R Scharifker.
\newblock Diffusion to ensembles of microelectrodes.
\newblock {\em J Electroanal Chem Interfacial Electrochem}, 240(1-2):61--76,
  1988.

\bibitem{brown1900}
Horace~Tabberer Brown and Fergusson Escombe.
\newblock Static diffusion of gases and liquids in relation to the assimilation
  of carbon and translocation in plants.
\newblock {\em Philosophical Transactions of the Royal Society of London.
  Series B, Containing Papers of a Biological Character},
  193(185-193):223--291, 1900.

\bibitem{wolf2016}
A~Wolf, W~R~L Anderegg, and S~W Pacala.
\newblock Optimal stomatal behavior with competition for water and risk of
  hydraulic impairment.
\newblock {\em Proc Natl Acad Sci}, 113(46):E7222--E7230, 2016.

\bibitem{hanggi1990}
Peter H{\"a}nggi, Peter Talkner, and Michal Borkovec.
\newblock Reaction-rate theory: fifty years after kramers.
\newblock {\em Reviews of modern physics}, 62(2):251, 1990.

\bibitem{pollak2005}
Eli Pollak and Peter Talkner.
\newblock Reaction rate theory: What it was, where is it today, and where is it
  going?
\newblock {\em Chaos: An Interdisciplinary Journal of Nonlinear Science},
  15(2):026116, 2005.

\bibitem{holcman2013}
D~Holcman and Z~Schuss.
\newblock Control of flux by narrow passages and hidden targets in cellular
  biology.
\newblock {\em Reports on Progress in Physics}, 76(7):074601, 2013.

\bibitem{basnayake2019}
K~Basnayake, Z~Schuss, and D~Holcman.
\newblock Asymptotic formulas for extreme statistics of escape times in 1, 2
  and 3-dimensions.
\newblock {\em J Nonlinear Sci}, 29(2):461--499, 2019.

\bibitem{schuss2019}
Z.~Schuss, K.~Basnayake, and D.~Holcman.
\newblock Redundancy principle and the role of extreme statistics in molecular
  and cellular biology.
\newblock {\em Physics of Life Reviews}, January 2019.

\bibitem{coombs2019}
D~Coombs.
\newblock First among equals: Comment on ``{R}edundancy principle and the role
  of extreme statistics in molecular and cellular biology'' by {Z. Schuss, K.
  Basnayake and D. Holcman}.
\newblock {\em Physics of life reviews}, 28:92--93, 2019.

\bibitem{redner2019}
S~Redner and B~Meerson.
\newblock Redundancy, extreme statistics and geometrical optics of {B}rownian
  motion: {C}omment on ``{R}edundancy principle and the role of extreme
  statistics in molecular and cellular biology'' by {Z. Schuss et al.}
\newblock {\em Physics of life reviews}, 28:80--82, 2019.

\bibitem{sokolov2019}
I~M Sokolov.
\newblock Extreme fluctuation dominance in biology: On the usefulness of
  wastefulness: Comment on ``{R}edundancy principle and the role of extreme
  statistics in molecular and cellular biology'' by {Z. Schuss, K. Basnayake
  and D. Holcman.}
\newblock {\em Physics of life reviews}, 2019.

\bibitem{rusakov2019}
D~A Rusakov and L~P Savtchenko.
\newblock Extreme statistics may govern avalanche-type biological reactions:
  Comment on ``{R}edundancy principle and the role of extreme statistics in
  molecular and cellular biology'' by {Z. Schuss, K. Basnayake, D. Holcman}.
\newblock {\em Physics of life reviews}, 2019.

\bibitem{martyushev2019}
L~M Martyushev.
\newblock Minimal time, weibull distribution and maximum entropy production
  principle: {C}omment on ``{R}edundancy principle and the role of extreme
  statistics in molecular and cellular biology'' by {Z. Schuss et al.}
\newblock {\em Physics of life reviews}, 28:83--84, 2019.

\bibitem{tamm2019}
M~V Tamm.
\newblock Importance of extreme value statistics in biophysical contexts:
  {C}omment on ``{R}edundancy principle and the role of extreme statistics in
  molecular and cellular biology.''.
\newblock {\em Physics of life reviews}, 2019.

\bibitem{basnayake2019c}
Kanishka Basnayake and David Holcman.
\newblock Fastest among equals: a novel paradigm in biology: {R}eply to
  comments: Redundancy principle and the role of extreme statistics in
  molecular and cellular biology.
\newblock {\em Physics of life reviews}, 28:96--99, 2019.

\bibitem{meerson2015}
B~Meerson and S~Redner.
\newblock Mortality, redundancy, and diversity in stochastic search.
\newblock {\em Phys Rev Lett}, 114(19):198101, 2015.

\bibitem{colesbook}
S~Coles.
\newblock {\em An introduction to statistical modeling of extreme values},
  volume 208.
\newblock Springer, 2001.

\bibitem{lawley2020uni}
S~D Lawley.
\newblock Universal formula for extreme first passage statistics of diffusion.
\newblock {\em Phys Rev E}, 101(1):012413, 2020.

\bibitem{friedman1976}
Avner Friedman.
\newblock {\em Stochastic differential equations and applications}, volume~2.
\newblock Academic Press, 1976.

\bibitem{devinatz1978}
Allen Devinatz and Avner Friedman.
\newblock Asymptotic behavior of the principal eigenfunction for a singularly
  perturbed dirichlet problem.
\newblock {\em Indiana University Mathematics Journal}, 27(1):143--157, 1978.

\bibitem{schuss1980}
Z~Schuss.
\newblock {\em Theory and applications of stochastic differential equations}.
\newblock J Wiley, 1980.

\bibitem{williams1982}
Michael Williams.
\newblock Asymptotic exit time distributions.
\newblock {\em SIAM Journal on Applied Mathematics}, 42(1):149--154, 1982.

\bibitem{marchetti1983}
Federico Marchetti.
\newblock Asymptotic exponentiality of exit times.
\newblock {\em Statistics \& Probability Letters}, 1(4):167--170, 1983.

\bibitem{day1983}
Martin~V Day.
\newblock On the exponential exit law in the small parameter exit problem.
\newblock {\em Stochastics: An International Journal of Probability and
  Stochastic Processes}, 8(4):297--323, 1983.

\bibitem{pavliotis2014}
G~A Pavliotis.
\newblock {\em Stochastic processes and applications: diffusion processes, the
  Fokker-Planck and Langevin equations}, volume~60.
\newblock Springer, 2014.

\bibitem{kolokolnikov2005}
Theodore Kolokolnikov, Michele~S Titcombe, and Michael~J Ward.
\newblock Optimizing the fundamental neumann eigenvalue for the laplacian in a
  domain with small traps.
\newblock {\em European Journal of Applied Mathematics}, 16(2):161--200, 2005.

\bibitem{cheviakov11}
A~F Cheviakov and M~J Ward.
\newblock Optimizing the principal eigenvalue of the laplacian in a sphere with
  interior traps.
\newblock {\em Math Comput Model}, 53(7–8):1394 -- 1409, 2011.

\bibitem{lawley2019imp}
S~D Lawley and J~B Madrid.
\newblock First passage time distribution of multiple impatient particles with
  reversible binding.
\newblock {\em J Chem Phys}, 150(21):214113, 2019.

\bibitem{shenoy1984}
Subodh~R Shenoy and GS~Agarwal.
\newblock First-passage times and hysteresis in multivariable stochastic
  processes: The two-mode ring laser.
\newblock {\em Physical Review A}, 29(3):1315, 1984.

\bibitem{grebenkov2014}
Denis~S Grebenkov.
\newblock First exit times of harmonically trapped particles: a didactic
  review.
\newblock {\em Journal of Physics A: Mathematical and Theoretical},
  48(1):013001, 2014.

\bibitem{meleard12}
Sylvie M{\'e}l{\'e}ard, Denis Villemonais, et~al.
\newblock Quasi-stationary distributions and population processes.
\newblock {\em Probability Surveys}, 9:340--410, 2012.

\bibitem{lawley2020dist}
S~D Lawley.
\newblock Distribution of extreme first passage times of diffusion.
\newblock {\em Journal of Mathematical Biology (arXiv:1910.12170)}, in press.

\bibitem{basnayake2019fast}
K~Basnayake, D~Mazaud, A~Bemelmans, N~Rouach, E~Korkotian, and D~Holcman.
\newblock Fast calcium transients in dendritic spines driven by extreme
  statistics.
\newblock {\em PLOS Biology}, 17(6):e2006202, June 2019.

\bibitem{corless1996}
RM~Corless, GH~Gonnet, DEG Hare, DJ~Jeffrey, and DE~Knuth.
\newblock On the {LambertW} function.
\newblock {\em Advances in Computational mathematics}, 5(1):329--359, 1996.

\bibitem{matlab}
MATLAB.
\newblock {\em version 9.3 (R2017b)}.
\newblock The MathWorks Inc., Natick, Massachusetts, 2017.

\bibitem{weiss1983}
G~H Weiss, K~E Shuler, and K~Lindenberg.
\newblock Order statistics for first passage times in diffusion processes.
\newblock {\em J Stat Phys}, 31(2):255--278, 1983.

\bibitem{yuste1996}
S~B Yuste and K~Lindenberg.
\newblock Order statistics for first passage times in one-dimensional diffusion
  processes.
\newblock {\em J Stat Phys}, 85(3-4):501--512, 1996.

\bibitem{yuste2000}
SB~Yuste and L~Acedo.
\newblock Diffusion of a set of random walkers in euclidean media. first
  passage times.
\newblock {\em J Phys A}, 33(3):507, 2000.

\bibitem{yuste2001}
S~B Yuste, L~Acedo, and K~Lindenberg.
\newblock Order statistics for $d$-dimensional diffusion processes.
\newblock {\em Phys Rev E}, 64(5):052102, 2001.

\bibitem{van2003}
H~van Beijeren.
\newblock The uphill turtle race; on short time nucleation probabilities.
\newblock {\em J Stat Phys}, 110(3-6):1397--1410, 2003.

\bibitem{redner2014}
S~Redner and B~Meerson.
\newblock First invader dynamics in diffusion-controlled absorption.
\newblock {\em J Stat Mech}, 2014(6):P06019, 2014.

\bibitem{lawley2020esp}
S~D Lawley and J~B Madrid.
\newblock A probabilistic approach to extreme statistics of brownian escape
  times in dimensions 1, 2, and 3.
\newblock {\em Journal of Nonlinear Science}, pages 1--21, 2020.

\bibitem{godec2016x}
A~Godec and R~Metzler.
\newblock Universal proximity effect in target search kinetics in the
  few-encounter limit.
\newblock {\em Phys Rev X}, 6(4):041037, 2016.

\bibitem{grebenkov2018strong}
D~S Grebenkov, R~Metzler, and G~Oshanin.
\newblock Strong defocusing of molecular reaction times results from an
  interplay of geometry and reaction control.
\newblock {\em Communications Chemistry}, 1(1):1--12, 2018.

\bibitem{hartich2018}
D~Hartich and A~Godec.
\newblock Duality between relaxation and first passage in reversible markov
  dynamics: rugged energy landscapes disentangled.
\newblock {\em New J Phys}, 20(11):112002, 2018.

\bibitem{hartich2019}
D~Hartich and A~Godec.
\newblock Interlacing relaxation and first-passage phenomena in reversible
  discrete and continuous space markovian dynamics.
\newblock {\em Journal of Statistical Mechanics: Theory and Experiment},
  2019(2):024002, 2019.

\bibitem{hartich2019reaction}
D~Hartich and A~Godec.
\newblock Reaction kinetics in the few-encounter limit.
\newblock In G~Oshanin K~Lindenberg, R~Metzler, editor, {\em Chemical kinetics:
  Beyond the textbook}, volume~1 of {\em 1}, chapter~11, pages 265--283. World
  Scientific, 1 edition, 2019.

\bibitem{billingsley2013}
P~Billingsley.
\newblock {\em Convergence of probability measures}.
\newblock John Wiley \& Sons, 1999.

\bibitem{renyi1953}
Alfr{\'e}d R{\'e}nyi.
\newblock On the theory of order statistics.
\newblock {\em Acta Mathematica Hungarica}, 4(3-4):191--231, 1953.

\bibitem{lawley2020pdmp}
S~D Lawley.
\newblock Extreme first passage times of piecewise deterministic markov
  processes.
\newblock {\em arXiv preprint arXiv:1912.03438}, 2019.

\bibitem{haanbook}
L~De~Haan and A~Ferreira.
\newblock {\em Extreme value theory: an introduction}.
\newblock Springer Science \& Business Media, 2007.

\bibitem{grebenkov2019full}
D~S Grebenkov, R~Metzler, and G~Oshanin.
\newblock Full distribution of first exit times in the narrow escape problem.
\newblock {\em New Journal of Physics}, 21(12):122001, 2019.

\end{thebibliography}
\bibliographystyle{unsrt}

\end{document}